\providecommand{\U}[1]{\protect\rule{.1in}{.1in}}
\newtheorem{theorem}{Theorem}[section]
\newtheorem{corollary}[theorem]{Corollary}
\newtheorem{lemma}[theorem]{Lemma}
\newtheorem{proposition}[theorem]{Proposition}
\newtheorem{remark}[theorem]{Remark}
\theoremstyle{definition}
\newtheorem{example}[theorem]{Example}
\newcommand{\R}{\mathbb{R}}
\newcommand{\N}{\mathbb{N}}
\newcommand{\Ball}{\mathbb{B}}
\newcommand{\E}{\mathbb{E}}
\newcommand{\haus}{\lambda}
\newcommand{\diam}{\mathrm{diam}}
\newcommand{\spn}{\mathrm{span}}
\newcommand{\inte}{\mathrm{int}}
\newcommand{\Lip}{\mathrm{Lip}}
\renewcommand{\P}{\mathcal{P}}
\newcommand{\CC}{\mathcal{C}}
\newcommand{\sph}{\mathbb{S}}
\newcommand{\D}{\mathcal{D}}
\newcommand{\ri}{\mathrm{ri}}
\newcommand{\calm}{\mathrm{calm}}
\providecommand{\argmin}{\mathop{\rm argmin}}
\providecommand{\proj}{\mathop{\rm proj}}
\providecommand{\conv}{\mathop{\rm conv}}
\providecommand{\tto}{\mathop{\rightrightarrows}\nolimits}
\newcommand{\dTV}{d_\mathrm{TV}}
\newcommand{\dW}{d_{W_1}}
\title{Lipschitz continuity of expected value under decision-dependent uncertainty with moving support}
\author{
	John Cotrina \thanks{ Universidad del Pac\'ifico, Lima, Per\'u. {\tt cotrina\_je@up.edu.pe} }
    \and
    Gonzalo Flores \thanks{Unidad de Acompañamiento Estudiantil, Universidad de O'Higgins, Rancagua, Chile. {\tt gonzalo.flores@uoh.cl}}
	\and 
	David Salas \thanks{Instituto de Ciencias de la Ingenier\'ia, Universidad de O'Higgins, Rancagua, Chile. {\tt david.salas@uoh.cl}}
	\and
	Anton Svensson \thanks{Instituto de Ciencias de la Ingenier\'ia, Universidad de O'Higgins, Rancagua, Chile. {\tt anton.svensson@uoh.cl}}
}
\date{}
\begin{document}
	
	\maketitle
	
\begin{abstract}
This paper addresses the problem of stochastic optimization with decision-dependent uncertainty, a class of problems where the probability distribution of the uncertain parameters is influenced by the decision-maker's actions. While recent literature primarily focuses on solving or analyzing these problems by directly imposing hypotheses on the distribution mapping, we explore in this work some of these properties for a specific construction by means of the moving support and a density function. The construction is motivated by the Bayesian approach to bilevel programming, where the response of a follower is modeled as the uncertainty, drawn from the moving set of optimal responses, which depends on the leader's decision. 
Our main contribution is to establish sufficient conditions for the Lipschitz continuity of the expected value function. We show that Lipschitz continuity can be achieved when the moving support is a Lipschitz continuous set-valued map with full-dimensional, convex, compact values, or when it is the solution set of a fully linear parametric problem. We also provide an example showing that the sole Lipschitz assumption on the moving set itself is not sufficient and that additional conditions are necessary.
\end{abstract}
	
\noindent\textbf{Keywords:} Decision dependent uncertainty, beliefs, set-valued maps, stochastic optimization, Lipschitz continuity, calmness. \textbf{MSC:} 91A65 (Stackelberg games), 90C15
(Stochastic programming), 49J53
(Set-valued and variational analysis).

\section{Introduction}\label{sec:intro}

Stochastic optimization with decision-dependent uncertainty is a variant of the usual stochastic programming problem, in which the decision-maker's actions influence the behavior of the uncertainty. Formally, for a feasible set $X$ one considers a mapping $\beta:x\in X\mapsto \beta_x\in\mathcal{P}(Y)$, where $\mathcal{P}(Y)$ denotes  the space of (Borelian) probability measures over some space $Y$. Then, for a cost function $\theta:X\times Y\to \R$, the archetypic problem is given by minimizing the expected value of $\theta(x,\xi)$, considering that $\xi$ is a random variable distributing with law $\beta_x$. That is, the standard problem is given by
\begin{equation}\label{eq:Decision-Dependent-Uncertainty}
    \min_{x\in X} \E_{\beta_x}[\theta(x,\cdot)]:=\int\theta(x,\xi) d\beta_x(\xi).
\end{equation}

The concept of stochastic optimization with decision-dependent distributions can be traced back to the  90's and the early 2000, with \cite{Jonsbraten1998:Class,ahmed2000strategic,GoelGrossmann2006:Class,Mallozzi1996,MallozziMorgan2005} (see also the references therein). The setting has resurfaced with its clear application to learning and data-driven optimization \cite{DrusvyatskiyXiao2023:Optimization,CutlerDrusvyatskiyHarchaoui2023:Drift,CutlerDiazDrusvyatskiy2024:Approximation,Nohadani2018:Optimization,Wang2025:Constrained, HeBolognaniDorflerMuehlebach2025Dynamics,WoodDallanese2023:StochasticSaddle}. In this literature, the main focus is on methodologies to solve Problem~\eqref{eq:Decision-Dependent-Uncertainty} (and its variants) based on direct hypotheses over the distribution mapping $\beta:x\mapsto \beta_x$.

Recently Problem~\eqref{eq:Decision-Dependent-Uncertainty} has been studied in \cite{salas2023existence}, in the context of stochastic bilevel programming \cite{BeckLjubicSchmidt2023Survey,BurtscheidtClaus2020BilevelLinear}. The idea is to consider a set-valued map $S:X\tto Y$ such that $S(x)$ models the uncertainty set over which the response of a second agent (the follower) is drawn. Then, by endowing the response $y\in S(x)$ with a decision-dependent probability distribution $y\sim \beta_x$, the problem of the decision-maker (the leader) is of the form of Problem~\eqref{eq:Decision-Dependent-Uncertainty}. The caveat here is that the probability measure $\beta_x$ must concentrate over the moving set $S(x)$, thus having moving support. Adopting the nomenclature of \cite{salas2023existence}, we will call the probability valued-map $\beta:x\mapsto \beta_x$ a \emph{belief} over the set-valued map $S$.

The model above was first introduced by Mallozzi and Morgan in 1996 \cite{Mallozzi1996} under the name of Intermediate Stackelberg games, since the main focus was to give an alternative between the optimistic and pessimistic approaches in bilevel optimization (see, e.g., \cite{dempe2002foundations,dempe2018pessimistic}).  The authors in \cite{salas2023existence} called it the Bayesian approach for bilevel games, since the main focus was to interpret the probability-valued map $\beta:x\mapsto \beta_x$, the belief, as a model by the leader of the uncertain behavior of the follower. There are some related works in the literature of stochastic bilevel programming but where the uncertainty is directly put on the data of the lower-level problem as a random variable $\xi(\omega)$, inducing a single-valued random response $y(x,\xi(\omega))$ (see, e.g., \cite{Claus2021Continuity,Claus2022Existence,BuchheimHenkeIrmai2022Knapsack}). The difficulty in such models is not the underlying distribution (which is constant) but rather the computation of the response map $y(x,\xi(\omega))$. In some settings, this latter model can be reduced to Problem~\eqref{eq:Decision-Dependent-Uncertainty}, as observed in \cite{munoz2024exploiting}.

As we mentioned above, most literature focuses on how the properties of the map $\beta:x\mapsto \beta_x$ aid to solve Problem~\eqref{eq:Decision-Dependent-Uncertainty}. In contrast, the focus of the Bayesian approach in \cite{salas2023existence} is rather on which kind of measure maps $\beta$ can be constructed as beliefs over a given moving set $S:X\tto Y$, such that the induced Problem~\eqref{eq:Decision-Dependent-Uncertainty} is solvable. That is, the data of the problem is not the measure map $\beta$ itself, but rather the moving support $S$.  

In \cite{salas2023existence} (as well as in \cite{Mallozzi1996}), the main contributions are on existence of solutions of Problem~\eqref{eq:Decision-Dependent-Uncertainty}, and so, on continuity properties of the mapping $$\phi:x\in X\mapsto \phi(x):=\E_{\beta_x}[\theta(x,\cdot)].$$ Particularly in \cite{salas2023existence}, it is observed that continuity of $\phi$ can be deduced for a large family of beliefs by studying how the neutral belief behaves, which is given by a uniform distribution over the moving set $S(x)$ (see equation~\eqref{eq:def neutral belief} for the formal definition). The main results of \cite{salas2023existence} in this sense are that $\phi$ is continuous for the neutral belief over $S$ in the following cases:
\vspace{.2cm}

\begin{enumerate}
    \item[(I)] the set-valued map $S:X\tto Y$ is continuous with full-dimensional, compact, convex values. 
    \item[(II)] the set-valued map $S:X\tto Y$ is given as the solution set of a bounded parametric fully linear problem, that is, $S(x) := \argmin_y\{ c^{\top}y\colon Ax+By\leq b \}$.    
\end{enumerate}
\vspace{.2cm}

In this work, we continue the study of the expected value function $\phi$, exploring sufficient conditions on the set-valued map $S$ to deduce Lipschitz continuity of $\phi$. Lipschitz continuity of $\phi$ can open the door to numerical treatments of Problem~\eqref{eq:Decision-Dependent-Uncertainty} and it is definitely a desired property in first-order analysis and optimization. In particular, subgradient methods are well-fitted for Lipschitz functions, when using the Clarke subdifferential or the Mordukhovich subdifferential, as they are nonempty at each point of the domain (see, e.g., \cite{rockafellar2009variational}). Also, in the context of constrained optimization if the objective is Lipschitz it allows us to use penalization techniques, see, e.g., \cite{Xiao2026Developing} and the references therein. Again in this setting, Lipschitz continuity of $S$ (in the sense of Hausdorff distances) and Lipschitz continuity of the integrand $\theta$ are not sufficient to guarantee Lipschitz continuity of the expected value function (see Example~\ref{ex:lip svm nonlip belief}, below). 

Our main results are that the map $\phi$ is Lipschitz continuous for the neutral belief over $S$ in the same cases (I) and (II) above, related to \cite{salas2023existence} (see Theorem~\ref{thm:neutral belief is radon lipschitz} and Proposition~\ref{prop:reduction to neutral beliefs} part 1 for (I), and Corollary \ref{cor:linear bilevel} for (II)).  
To the best of our knowledge, the properties of decision-dependent distributions induced by given moving sets and their applications to bilevel programming have only been studied in \cite{munoz2024exploiting,salas2023existence}, and the precursor works \cite{Mallozzi1996,MallozziMorgan2005}. Sufficient conditions to obtain Lipschitz continuity in this context have not yet been reported in the literature.

The rest of the paper is organized as follow. In Section~\ref{sec:pre}, we revise some preliminaries on Lipschitz analysis, set-valued maps and measure theory, and we present some initial results. In Section~\ref{sec:Convex}, we study some Lipschitz properties of maps defined over the space of convex compact sets, which are technical lemmas needed for the main results. In Section~\ref{sec:lipschitz beliefs}, we present our main results: sufficient conditions on $S$ to obtain Lipschitz continuity of $\phi$, when $\beta$ is the neutral belief. Section~\ref{sec:bilevel} is devoted to apply the results in the context of bilevel programming, where we show Lipschitz continuity of the expected value function in the context of approximate bilevel programming and fully linear bilevel programming. We finish the work with some conclusions and perspectives in Section~\ref{sec:conclusion}.


\section{Preliminaries and problem formulation} \label{sec:pre}
We consider the space $\R^m$ endowed with its usual inner product $\langle\cdot,\cdot\rangle$ and the induced norm $\|\cdot\|$. We denote by $\Ball_m$ and $\sph_{m-1}$ the unit ball and unit sphere in $\R^m$, respectively. 

Given $x\in\R^m$ and $r>0$ we write $B(x,r)$ for the open ball centered at $x$ with radius $r>0$, and $\overline{B}(x,r)$ for the corresponding closed ball. Given a nonempty set $A\subseteq \R^m$ we denote its diameter, interior, relative interior, affine hull, convex hull  and boundary by $\diam(A)$, $\inte(A)$, $\ri(A)$, $\mathrm{aff}(A)$, $\conv(A)$ and $\partial A$, respectively. We denote the distance of $x$ to $A$ as $d(x,A)$. If $A$ is closed and convex, we write $\proj(x,A)$ to denote the projection of $x$ to $A$. The dimension of a convex set $A\subset\R^m$, denoted by $\dim(A)$, is the dimension of its affine hull. We use the same notation (whenever it makes sense) for points and sets in arbitrary metric spaces.

\subsection{Preliminaries on metric analysis} Since we will work with several metric spaces afterward, we present the elements of metric analysis considering two arbitrary metric spaces, $(M,d_M)$ and $(N,d_N)$.

Let $f:M\to N$ be a function. We say that $f$ is Lipschitz (on $A\subseteq M$ resp.) if there exists a constant $L> 0$ such that
\[
    d_N(f(x),f(x'))\leq Ld_M(x,x')\qquad  \forall x,x'\in M \, (A\text{ resp.}).
\]
In that case, we call $L$ a Lipschitz constant for $f$ (on $A$, resp.) and we say that $f$ is $L$-Lipschitz (on $A$, resp.). We define $\Lip(f)$ (or sometimes $\Lip_{d_N}(f)$ to emphasize on the metric considered in $N$) as the infimum of such Lipschitz constants.\\    
Given $x\in M$, we say that $f$ is locally Lipschitz at $x$ if there exists $\delta>0$ such that $f$ is Lipschitz on $A=B(x,\delta)$. In that case, we define the Lipschitz number of $f$ at $x$ as
\begin{equation}\label{eq:localLipschitzConstant}
\Lip(f,x):=\inf_{\delta>0} \Lip\left(f|_{B(x,\delta)}\right),
\end{equation}
and, again, we may write $\Lip_{d_N}(f,x):=\Lip(f,x)$ to emphasize the dependence on the metric in $N$. We say that $f$ is uniformly locally Lipschitz if the constant $\Lip(f,x)$ is uniformly bounded for $x\in M$.

Finally, we say that $f$ is calm,  (see e.g. \cite[Section 8.F]{rockafellar2009variational}) at $x\in M$ if there exists $L> 0$ and $\delta>0$ such that
$$d_N(f(x),f(x'))\leq Ld_M(x,x')\qquad \forall x'\in B(x,\delta)$$
and, in that case, we define the modulus of calmness of $f$ at $x$,  $\calm(f,x)$, as the infimum of such constants $L$ as $\delta$ vanishes, or equivalently
\begin{equation}\label{eq:Punctually Lipschitz}
 \calm(f,x):=\limsup_{x'\to x} \frac{d_N(f(x),f(x'))}{d_M(x,x')}.
\end{equation}
We say that $f$ is uniformly calm if the constant $\calm(f,x)$ is uniformly bounded for $x\in M$.

It is well-known that the concepts of local Lipschitz continuity and (global) Lipschitz continuity are equivalent if the domain is a compact space (see, e.g., \cite[Theorem 2.1.6]{cobzacs2019lipschitz}). In contrast, uniform calmness is weaker than uniform local Lipschitz continuity, even in compact spaces. Indeed, we can take 
$$M:=\{0\}\cup \bigcup_{n\in\N}\underbrace{[n^{-1}-(2n)^{-2},n^{-1}+(2n)^{-2}]}_{=:I_n}$$ 
and $f:M\to\R$ given by $f(0):=0$ and $f(x):=n^{-2}\sin(n)$ for $x\in I_n$ with $n\in \N$. Clearly, $M$ is compact and $\calm(f,x)=0$ for all $x\in M$ while $f$ is not locally Lipschitz around $0$.
However, there is a reasonable framework where these concepts coincide, namely, the \emph{quasiconvex spaces}. A metric space $(M,d_M)$ is said to be $c$-quasiconvex with $c\geq 1$ (see e.g. \cite{LiuZhou2023HJmetricspaces} and the references therein) if for every $x,y\in M$ there exists a continuous curve $\gamma:[0,1]\to M$ connecting $x$ and $y$ (i.e. $\gamma(0) = x$ and $\gamma(1) = y$), such that $\ell(\gamma) \leq cd_M(x,y)$, where $\ell(\gamma)$ is the length of $\gamma$, that is,
\begin{equation}\label{eq: length}
    \ell(\gamma) := \sup\left\{ \sum_{i=0}^n d(\gamma(t_i),\gamma(t_{i+1})) \right\},
\end{equation}
where the supremum is taken over all partitions $0=t_0<t_1<\cdots<t_{n+1}=1$ of the interval $[0,1]$ (see, e.g. \cite{LiuZhou2023HJmetricspaces,AmbrosioGigliSavare2008}). If $M$ is $c$-quasiconvex for every $c>1$, then $M$ is said to be a \textit{length space}. Finally, if $M$ is 1-quasiconvex, then it is a \textit{geodesic space}. Quasiconvex spaces include convex sets and compact manifolds, among their most notable examples.

\begin{lemma} \label{lem:punctual to local lip}
    Let $M$ be a $c$-quasiconvex space and $f:M\to N$ a function. 
    \begin{enumerate}
    \item Assume that $f$ is uniformly calm, that is, there exists $L>0$ such that $\calm(f,x)\leq L$ for all $x\in M$. Then $f$ is Lipschitz and $\Lip(f)\leq cL$.
    \item Let $\bar{x}\in M$ and assume that $f$ is uniformly calm near $\bar{x}$, that is, there exists $L>0$ and $\delta>0$ such that $\calm(f,x)\leq L$ for all $x\in B(\bar x,\delta)$. Then $f$ is locally Lipschitz around $\bar{x}$ and $\Lip(f,\bar{x})\leq cL$. 
    \end{enumerate}
\end{lemma}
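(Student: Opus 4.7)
The plan is to reduce both parts to a single one-variable estimate: if $\gamma:[0,1]\to M$ is a rectifiable curve along which $\calm(f,\cdot)\leq L$ holds at every point, then $d_N(f(\gamma(0)),f(\gamma(1)))\leq L\,\ell(\gamma)$. Once this is granted, part 1 is immediate: given $x,y\in M$, $c$-quasiconvexity of $M$ supplies a curve $\gamma$ from $x$ to $y$ with $\ell(\gamma)\leq c\,d_M(x,y)$, and since $\calm(f,\cdot)\leq L$ holds on all of $M$ the curve estimate produces $d_N(f(x),f(y))\leq cL\,d_M(x,y)$, hence $\Lip(f)\leq cL$.

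For the curve estimate I would fix $\varepsilon>0$ and run a ``sup-of-a-closed-set'' continuation on
$$
T\ :=\ \bigl\{\,s\in[0,1]\ :\ d_N(f(\gamma(0)),f(\gamma(s)))\ \leq\ (L+\varepsilon)\,\ell(\gamma|_{[0,s]})\,\bigr\}.
$$
Clearly $0\in T$, and $T$ is closed because $f\circ\gamma$ is continuous (pointwise calmness forces continuity of $f$ at every point of $\gamma$) and the partial length $s\mapsto \ell(\gamma|_{[0,s]})$ is lower semicontinuous. Setting $s^\star:=\sup T$, closedness gives $s^\star\in T$, and the claim is $s^\star=1$. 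Otherwise, the $\limsup$ definition \eqref{eq:Punctually Lipschitz} at $\gamma(s^\star)$ supplies a radius $\rho>0$ with $d_N(f(\gamma(s^\star)),f(w))\leq (L+\varepsilon/2)\,d_M(\gamma(s^\star),w)$ for $w\in B(\gamma(s^\star),\rho)$, and continuity of $\gamma$ at $s^\star$ produces $\eta>0$ with $\gamma([s^\star,s^\star+\eta])\subseteq B(\gamma(s^\star),\rho)$. For $s\in(s^\star,s^\star+\eta]$, the triangle inequality combined with $d_M(\gamma(s^\star),\gamma(s))\leq \ell(\gamma|_{[s^\star,s]})=\ell(\gamma|_{[0,s]})-\ell(\gamma|_{[0,s^\star]})$ and the monotonicity of the partial length collapses the estimate into $s\in T$, contradicting $s^\star=\sup T$. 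Hence $s^\star=1$, and letting $\varepsilon\to 0^+$ finishes the one-variable estimate.

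Part 2 then only requires a calibration that keeps the connecting curve inside the region of uniform calmness. I would pick any $\delta'\in(0,\delta/(1+2c))$: for $x,y\in B(\bar x,\delta')$ and any $\gamma$ joining them with $\ell(\gamma)\leq c\,d_M(x,y)<2c\delta'$, every point $z\in\gamma([0,1])$ satisfies
$$
d_M(\bar x,z)\ \leq\ d_M(\bar x,x)+d_M(x,z)\ <\ \delta'+\ell(\gamma)\ <\ (1+2c)\delta'\ <\ \delta,
$$
so the uniform calmness bound $\calm(f,z)\leq L$ is in force along $\gamma$. The curve estimate then yields $\Lip(f|_{B(\bar x,\delta')})\leq cL$, whence $\Lip(f,\bar x)\leq cL$ by the infimum defining $\Lip(f,\bar x)$.

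The argument is essentially routine metric-space analysis, and the step requiring real care is the continuation in paragraph two: the $\varepsilon/2$ buffer is what converts the pointwise $\limsup$ description of $\calm$ into a genuine local Lipschitz inequality that can be chained along all of $\gamma$, and the fundamental bound $d_M(\gamma(s^\star),\gamma(s))\leq \ell(\gamma|_{[s^\star,s]})$ is what transforms local Lipschitz increments into a bound by length. Closedness of $T$ and the elementary $\delta'<\delta/(1+2c)$ calibration in part 2 then glue everything together.
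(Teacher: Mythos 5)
Your proof is correct and follows essentially the same strategy as the paper's: both propagate the pointwise calmness estimate along the quasiconvexity curve and bound the accumulated increment by $(L+\varepsilon)\ell(\gamma)\leq (L+\varepsilon)c\,d_M(x,y)$, with the same localization trick for part 2. The only difference is the continuation device --- you use the standard sup-of-a-closed-set argument (where what is really needed, and what holds by monotonicity of the partial length together with continuity of $f\circ\gamma$, is that $\sup T\in T$) in place of the paper's transfinite recursion; this is an equivalent and arguably cleaner way to perform the same chaining.
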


\begin{proof}
    Let us first show the first statement. Consider $x_0,x_1\in M$. Set $\varepsilon>0$ and consider a continuous curve $\gamma:[0,1]\to M$ from $\gamma(0)=x_0$ to $\gamma(1)=x_1$ with $\ell(\gamma)\leq cd_M(x_0,x_1)$. For each $t\in [0,1]$ we have that $\calm(f,\gamma(t))<L+\varepsilon$, and so there exists $\delta_t>0$ such that for all $x\in B(\gamma(t),\delta_t)$
    \[d_N(f(\gamma(t)),f(x))\leq (L+\varepsilon)d_M(\gamma(t),x).
    \]
    By the continuity of $\gamma$ for each $t$ we can find $\rho_t>0$ such that
    \[d_N(f(\gamma(t)),f(\gamma(s)))\leq (L+\varepsilon)d_M(\gamma(t),\gamma(s))    \]
    whenever $|s-t|<\rho_t$. Let us define the set 
    \[
    A=\left\{a\in[0,1]:\begin{array}{c}
        \exists k\in\N, \; \exists (t_i)_{i=0}^k \text{ with } t_0=0,\; t_k=a,\text{ such that }\\ 0<t_i-t_{i-1}<\max(\rho_{t_i},\rho_{t_{i-1}}) \quad \forall i\in[k]
    \end{array}\right\}
    \]

    We claim that $1\in A$. Under this claim there exists $0=t_0<t_1<\ldots<t_k=1$ such that $|t_i-t_{i-1}|$ is either less that $\rho_{t_i}$ or less than $\rho_{t_{i-1}}$ for all $i\in[k]$. Therefore we have 
$$\begin{aligned}
d_N(f(x_0),f(x_1))&\leq \sum_{i\in{k}} d_N(f(\gamma(t_{i-1})),f(\gamma(t_i)))\\
&\leq \sum_{i\in[k]}(L+\varepsilon)d_M(\gamma(t_{i-1}),\gamma(t_i))\\
&\leq (L+\varepsilon)\ell(\gamma)\leq (L+\varepsilon) cd_M(x_0,x_1).
\end{aligned}$$
Since this is true for any $\varepsilon>0$ we deduce that $f$ es $cL$ Lipschitz.

It only remains to prove the claim $1\in A$. First we will show that $A$ has a maximum. Indeed, if $m=\sup A$ then there exists $a\in A$ with $a> m-\frac{\rho_m}{2}$, and so also $(t_i)_{i=0}^k$ such that $0=t_0<t_1<\ldots<t_k=a$ with
$$t_i-t_{i-1}< \max(\rho_{t_i},\rho_{t_{i-1}}).$$
Then defining $t_{k+1}:=m$ we see that $m\in A$ as  
$$t_{k+1}-t_k=m-a<\rho_m/2\leq \max(\rho_{t_{k+1}},\rho_{t_k})$$
showing that $m$ is the maximum of $A$. To end, let us assume  by contradiction that $m<1$. In that case considering $t_{k+1}=\min(1,t_k+\frac{\rho_{t_k}}{2})$ clearly shows that $t_k<t_{k+1}\in A$ which is a contradiction.

   For the second part, we localize the argument above. Take $\bar{x}\in M$ and $\delta,L>0$ such that $\calm(f,x)\leq L$ for all $x\in B(\bar{x},\delta)$. Set $\eta = \delta/2c$, and
    take $x_0,x_1\in B(\bar{x},\eta)$. Set $\varepsilon>0$ and consider a continuous $\gamma:[0,1]\to M$ from $\gamma(0)=x_0$ to $\gamma(1)=x_1$ with $\ell(\gamma)\leq cd_M(x_0,x_1)$. Then, 
    \[
    \gamma([0,1])\subset B(x_0,\delta/2)\subset B(\bar{x},\delta/2+\eta)\subset B(\bar{x},\delta),
    \]
    and thus, for each $t\in [0,1]$ we have that $\calm(f,\gamma(t))<L+\varepsilon$. The rest of the proof follows as above, deducing that $f$ is $cL$-Lipschitz in $B(\bar{x},\eta)$.
\end{proof}

The following lemma summarizes useful calculus rules for locally Lipschitz functions and can be found for instance in \cite{cobzacs2019lipschitz}, namely Propositions 2.3.1, 2.3.2, 2.3.3, 2.3.4 and 2.3.7.
\begin{lemma}
\label{lem:lipschitz algebra}
Let $u,v:M\to\R^m$ and $\alpha:M\to\R$ be locally Lipschitz functions around $\bar{x}\in M$, and $\varphi:A\subset\R^m\to\R^m$ a locally Lipschitz function around $u(\bar{x})\in A$. Then, the following functions are locally Lipschitz around $\bar{x}$:
\begin{enumerate}\setlength{\itemsep}{0.3cm}
    \item $\|u\|$, with $\Lip(\|u\|,\bar{x}) \leq \textup{Lip}(u,\bar{x})$;
    \item $\alpha u+v$, with $\Lip(\alpha u+v,\bar{x})\leq |\alpha(\bar{x})|\Lip(u,\bar{x})+\|u(\bar{x})\|\Lip(\alpha,\bar{x})+\Lip(v,\bar{x})$;
    \item $\langle u,v\rangle$, with $\Lip(\langle u,v\rangle,\bar{x})\leq \|u(\bar{x})\|\Lip(u,\bar{x})+\|v(\bar{x})\|\Lip(v,\bar{x})$;
    \item $\varphi\circ u$, with $\Lip(\varphi\circ u,\bar{x}) \leq \Lip(\varphi,u(\bar{x}))\Lip(u,\bar{x})$;
    \item $\frac{1}{\alpha}$, with $\Lip\left(\frac{1}{\alpha},\bar{x}\right)\leq \frac{1}{(\alpha(\bar{x}))^{2}}\Lip(\alpha,\bar{x})$, provided $\alpha(\bar{x})\neq 0$;
    \item $\frac{u}{\|u\|}$, with $\Lip\left(\frac{u}{\|u\|},\bar{x}\right)\leq \frac{1}{\|u(\bar{x})\|}\Lip(u,\bar{x})$, provided $u(\bar{x})\neq 0$.
\end{enumerate}
\end{lemma}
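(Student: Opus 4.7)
The plan is to verify each item by a direct estimate on a suitably small ball $B(\bar x,\delta)$, and then take $\delta\to 0^+$ to obtain the stated bound on $\Lip(\,\cdot\,,\bar x)$. For any $\varepsilon>0$ I will choose $\delta$ small enough so that $u,v,\alpha$ are simultaneously Lipschitz on $B(\bar x,\delta)$ with constants within $\varepsilon$ of their punctual Lipschitz numbers, and moreover so that $\sup_{x\in B(\bar x,\delta)}\|u(x)-u(\bar x)\|$, $|\alpha(x)-\alpha(\bar x)|$ are as small as needed. This reduces the problem to a handful of standard triangle/Cauchy--Schwarz manipulations.

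For item 1, the reverse triangle inequality $\bigl|\,\|u(x)\|-\|u(x')\|\,\bigr|\le \|u(x)-u(x')\|$ yields the bound immediately. For item 2, I write
\[
\alpha(x)u(x)+v(x)-\alpha(x')u(x')-v(x')=\alpha(x)\bigl(u(x)-u(x')\bigr)+\bigl(\alpha(x)-\alpha(x')\bigr)u(x')+\bigl(v(x)-v(x')\bigr),
\]
take norms, use that $|\alpha(x)|\le|\alpha(\bar x)|+\varepsilon$ and $\|u(x')\|\le\|u(\bar x)\|+\varepsilon$ on $B(\bar x,\delta)$, and let $\varepsilon\to 0$. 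For item 3 I use the symmetric trick $\langle u(x),v(x)\rangle-\langle u(x'),v(x')\rangle=\langle u(x)-u(x'),v(x)\rangle+\langle u(x'),v(x)-v(x')\rangle$ followed by Cauchy--Schwarz and the local bounds on $\|u\|,\|v\|$.

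For item 4 (composition), the key observation is that $u$, being locally Lipschitz (hence continuous) at $\bar x$, maps a small enough $B(\bar x,\delta)$ into a ball $B(u(\bar x),\rho)$ on which $\varphi$ is Lipschitz with constant within $\varepsilon$ of $\Lip(\varphi,u(\bar x))$; composing the two inequalities and letting $\varepsilon\to 0$ gives the product bound. For item 5, I use continuity of $\alpha$ to ensure $|\alpha(x)|\ge |\alpha(\bar x)|/2>0$ on some $B(\bar x,\delta)$, whence
\[
\left|\tfrac{1}{\alpha(x)}-\tfrac{1}{\alpha(x')}\right|=\frac{|\alpha(x')-\alpha(x)|}{|\alpha(x)\alpha(x')|},
\]
and the denominator converges to $\alpha(\bar x)^2$ as $\delta\to 0$. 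Item 6 follows by combining items 4 and 5 with item 1: write $u/\|u\|=\varphi\circ u$ with $\varphi(y)=y/\|y\|$, and check that $\Lip(\varphi,u(\bar x))\le 1/\|u(\bar x)\|$ using the computation for $1/\|\cdot\|$ from items 1 and 5.

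The only real subtlety is the bookkeeping: one must localize \emph{uniformly} for all the functions involved before letting the radius shrink, and the claimed constants are sharp only in the limit, so each step is of the form ``bound holds with an extra $\varepsilon$ on a small enough ball, take $\varepsilon\to 0$ through the definition \eqref{eq:localLipschitzConstant}''. This is the sort of computation that is routine once the correct add-and-subtract splittings are identified, which is why the statement is quoted from \cite{cobzacs2019lipschitz} rather than proved in detail.
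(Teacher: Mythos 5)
The paper itself offers no proof of this lemma---it is quoted from \cite{cobzacs2019lipschitz}---so your sketch stands on its own. Items 1, 2, 4 and 5 are correct: the splittings are the standard ones and passing to the limit through the definition \eqref{eq:localLipschitzConstant} yields the stated constants. For item 3, however, note that your decomposition actually produces the \emph{cross} bound $\|v(\bar x)\|\Lip(u,\bar x)+\|u(\bar x)\|\Lip(v,\bar x)$, not the displayed $\|u(\bar x)\|\Lip(u,\bar x)+\|v(\bar x)\|\Lip(v,\bar x)$. The displayed form is in fact false as written (take $u$ a nonzero constant and $v$ with $v(\bar x)=0$ but $\Lip(v,\bar x)>0$: the displayed right-hand side is $0$ while $\langle u,v\rangle$ need not be locally constant). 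The cross form is what the paper actually invokes in the proof of Lemma~\ref{lem:orthogonal basis}, so your computation proves the correct version of the item; just be aware that you have not derived the inequality as printed, because it carries a transcription error.

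The genuine gap is item 6. Writing $u/\|u\|=\varphi\circ u$ with $\varphi(y)=y/\|y\|$ and invoking item 4 is the right strategy, but the constant $\Lip(\varphi,y_0)\leq 1/\|y_0\|$ does \emph{not} follow ``from items 1 and 5'': treating $\varphi$ as the product $\alpha(y)\,y$ with $\alpha(y)=1/\|y\|$ and applying items 1, 2 and 5 gives $\Lip(\varphi,y_0)\leq \frac{1}{\|y_0\|}+\|y_0\|\cdot\frac{1}{\|y_0\|^{2}}=\frac{2}{\|y_0\|}$, and the direct splitting $\frac{y}{\|y\|}-\frac{y'}{\|y'\|}=\frac{y-y'}{\|y\|}+y'\bigl(\frac{1}{\|y\|}-\frac{1}{\|y'\|}\bigr)$ yields the same factor $2$. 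To recover the stated constant you need an ingredient that exploits the inner-product structure: either the Euclidean Dunkl--Williams inequality $\bigl\|\frac{y}{\|y\|}-\frac{y'}{\|y'\|}\bigr\|\leq\frac{2\|y-y'\|}{\|y\|+\|y'\|}$, whose coefficient tends to $1/\|y_0\|$ as $y,y'\to y_0$, or the fact that $\varphi$ is $C^1$ away from the origin with $D\varphi(y_0)=\frac{1}{\|y_0\|}\bigl(I-\frac{y_0y_0^{\top}}{\|y_0\|^{2}}\bigr)$, an operator of norm exactly $1/\|y_0\|$. As written, your argument only establishes $\Lip(u/\|u\|,\bar x)\leq 2\Lip(u,\bar x)/\|u(\bar x)\|$---enough for local Lipschitzness, but not the constant claimed in the statement and used later (e.g.\ in the estimate $\Lip(b_j,\bar x)\leq\|v_j(\bar x)\|^{-1}\Lip(v_j,\bar x)$ inside the proof of Lemma~\ref{lem:orthogonal basis}).
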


\subsection{General notation} Let $Y$ be a nomempty compact subset of $\R^m$ and $X$ a metric space. Recall that a set-valued map $S:X\tto Y$ is a function that assigns to each $x\in X$ a subset $S(x)$ of $Y$, which we refer to as the image of $x$. 
For two nonempty closed subsets $A,B\subseteq Y$ (hence compact), we write
\begin{equation}\label{eq:HausdorffDistance}
   d_H(A,B):=\max(e(A,B),e(B,A)),
\end{equation}
where $e(A,B):=\sup \{d(a,B): a\in A\}$ is the excess of $A$ over $B$. If the set-valued map $S$ has closed values, we say that $S$ is Lipschitz if it is so for the Hausdorff metric $d_H$. That is, if there exists $L>0$ such that
\begin{equation}\label{eq:Lipschitz-Set-Valued}
    d_H(S(x),S(x'))\leq Ld(x,x'),\qquad \forall x,x'\in X.
\end{equation}

We write $\lambda$ and  $\haus_{k}$ to denote the Lebesgue measure and the $k$-dimensional Hausdorff measure in $\R^m$, respectively. We write $\mathcal{B}(Y)$ and $\P(Y)$ to denote the Borel $\sigma$-algebra and the space of Borel probability measures over $Y$, respectively. In $\P(Y)$, we consider the following distance functions (see e.g. \cite[p. 385]{klenke2013probability}):
\begin{itemize}
    \item The \emph{Total Variation distance} $\dTV:\P(Y)\times \P(Y)\to\R_+$, defined by 
    \begin{equation}\label{eq:RadonDistance-def}
    \dTV(\mu,\nu):=\sup\{\E_\mu[f]-\E_\nu[f]\mid f:Y\to\R\text{ measurable}, \|f\|_{\infty}\leq 1\}.
    \end{equation}
    \item The \emph{Wasserstein-1 distance} $\dW:\P(Y)\times \P(Y)\to\R_+$, defined by 
    \begin{equation}
    \dW(\mu,\nu):=\sup\{\E_\mu[f]-\E_\nu[f]\mid f:Y\to\R \text{ Lipschitz}, \Lip(f)\leq 1\}.
    \end{equation}
\end{itemize}

Note that since we assume $Y\subseteq \R^m$ is compact, then $\dW\leq \frac{1}{2}\diam(Y)\dTV$. Recall from \cite{salas2023existence} that a map $\beta:X\to \P(Y)$ is said to be a \emph{belief} over a set-valued map $S:X\tto Y$ if for each $x\in X$, $\beta_x:=\beta(x)$ concentrates on $S(x)$, that is, if
\begin{equation}\label{eq:Def-Belief}
    \beta_x(S(x)) = 1,\qquad\forall x\in X.
\end{equation}

\subsection{Problem formulation}\label{subsec:formulation}
Let $X$ be a metric space, $Y$ a nonempty compact subset of $\R^m$, $S:X\tto Y$ a set-valued map with nonempty closed images, and let $\beta:X\to\P(Y)$ be a belief over $S$.

We consider the following model. A decision maker chooses $x\in X$, and after that a random variable $y$ is drawn following the decision-dependent distribution $\beta_x$ whose support is $S(x)$. The distribution $\beta_x$ models how the decision maker \emph{believes} the actual realization $y\in S(x)$ is selected by nature, and considers this information to decide $x\in X$. The cost of the decision $x$ with the realization $y$ of the random parameter is $\theta(x,y)$ where $\theta:X\times \R^m\to \R$ is a given cost function. Then the problem of the decision maker, central to this paper, can be posed as the following 
stochastic decision-dependent optimization problem
\begin{equation}
    \min_{x\in X} \E_{\beta_x}[\theta(x,\cdot)].
\end{equation} 

Our aim is to study the Lipschitz continuity of the objective function $\phi:x\in X\mapsto \E_{\beta_x}[\theta(x,\cdot)]$ in the setting where $\beta$ is a belief over the set-valued map $S$. The following proposition presents a natural sufficient condition: the Lipschitz continuity of $\phi$ can be obtained by studying the Lipschitz continuity of $\beta$ and $\theta$.
\begin{proposition}
    \label{prop:reduction to neutral beliefs} 
    Let $(X,d)$ be a compact metric space, and $Y$ be a nonempty compact convex subset of $\R^m$. Let $\beta:X\to\P(Y)$ be a belief and $\theta:X\times Y\to \R$ a function. Assume that at least one of the following holds:
    \begin{enumerate}
        \item $\beta$ is Lipschitz with respect to $\dTV$ and $\theta$ is continuous and uniformly Lipschitz in the first variable.
        \item $\beta$ is Lipschitz with respect to $\dW$ and $\theta$ is Lipschitz.
    \end{enumerate} Then $x\mapsto \phi(x):=\E_{\beta_x}[\theta(x,\cdot)]$ is Lipschitz.
\end{proposition}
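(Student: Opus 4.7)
The plan is to bound the increment $\phi(x)-\phi(x')$ by the standard additive decomposition
\begin{equation*}
\phi(x)-\phi(x') = \E_{\beta_x}[\theta(x,\cdot)-\theta(x',\cdot)] + \bigl(\E_{\beta_x}[\theta(x',\cdot)]-\E_{\beta_{x'}}[\theta(x',\cdot)]\bigr),
\end{equation*}
and to handle the two terms by the Lipschitzianity of $\theta$ (first summand) and by the Lipschitzianity of $\beta$ against a suitable test function $\theta(x',\cdot)$ (second summand), under each of the two hypotheses.

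First I would control the first summand. Under either hypothesis, $\theta$ is Lipschitz in the first variable with some constant $L_\theta$ uniformly in $y$, so $|\theta(x,y)-\theta(x',y)|\leq L_\theta d(x,x')$ for every $y\in Y$, and thus
\begin{equation*}
|\E_{\beta_x}[\theta(x,\cdot)-\theta(x',\cdot)]| \leq L_\theta\, d(x,x').
\end{equation*}
This step only uses that $\beta_x$ is a probability measure.

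Next I would control the second summand by interpreting it through the definition of the relevant distance on $\P(Y)$. In case 1, $X\times Y$ is compact and $\theta$ is continuous, hence bounded by some $M>0$; the function $\theta(x',\cdot)$ is then measurable with $\|\theta(x',\cdot)\|_\infty\leq M$, so the definition of $\dTV$ in \eqref{eq:RadonDistance-def}, applied to $\theta(x',\cdot)/M$ and combined with the symmetry $\dTV(\mu,\nu)=\dTV(\nu,\mu)$, yields
\begin{equation*}
\bigl|\E_{\beta_x}[\theta(x',\cdot)]-\E_{\beta_{x'}}[\theta(x',\cdot)]\bigr| \leq M\,\dTV(\beta_x,\beta_{x'}) \leq M\,\Lip(\beta)\,d(x,x').
\end{equation*}
In case 2, $\theta$ is jointly Lipschitz, so $\theta(x',\cdot)$ is Lipschitz with Lipschitz constant at most $\Lip(\theta)$ (independent of $x'$), and the analogous bound with $\dW$ gives
\begin{equation*}
\bigl|\E_{\beta_x}[\theta(x',\cdot)]-\E_{\beta_{x'}}[\theta(x',\cdot)]\bigr| \leq \Lip(\theta)\,\dW(\beta_x,\beta_{x'}) \leq \Lip(\theta)\,\Lip(\beta)\,d(x,x').
\end{equation*}

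Adding the two bounds yields Lipschitzianity of $\phi$ with an explicit constant of the form $L_\theta+M\,\Lip(\beta)$ in case 1 and $\Lip(\theta)(1+\Lip(\beta))$ in case 2. The argument is essentially routine; the only point requiring a sentence of justification is that the test function $\theta(x',\cdot)$ is admissible in the respective dual formulations of $\dTV$ and $\dW$ with a bound independent of $x'$, which is where the compactness of $X\times Y$ together with continuity in case 1, and joint Lipschitzianity in case 2, play their role. No obstacle beyond this bookkeeping is expected.
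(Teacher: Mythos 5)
Your proof is correct and follows essentially the same route as the paper: the standard telescoping decomposition into a term controlled by the Lipschitzianity of $\theta$ in $x$ and a term controlled by the dual formulation of $\dTV$ (resp.\ $\dW$) applied to the test function $\theta(x',\cdot)$, with the same resulting constants up to the choice of intermediate term. The only (immaterial) difference is that the paper inserts $\E_{\beta_{x'}}[\theta(x,\cdot)]$ as the intermediate quantity whereas you insert $\E_{\beta_x}[\theta(x',\cdot)]$.
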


\begin{proof}
    In the first case we have that there exists $L>0$ such that $\theta(\cdot ,y )$ is $L$ Lipschitz for all $y\in Y$. Then we have that,
\[
\begin{aligned}
    |\phi(x)-\phi(x')|&\leq |\E_{\beta_x}[\theta(x,\cdot )]-\E_{\beta_{x'}}[\theta(x,\cdot )]|+|\E_{\beta_{x'}}[\theta(x,\cdot )]-\E_{\beta_{x'}}[\theta(x',\cdot )]|\\
    &\leq \|\theta(x,\cdot )\|_\infty \Lip_{\mathrm{TV}}(\beta)d(x,x')+\E_{\beta_{x'}}[|\theta(x,\cdot)-\theta(x',\cdot)|]\\
    &\leq (\|\theta\|_\infty \Lip_{\mathrm{TV}}(\beta)+L)d(x,x').
\end{aligned}    
    \]
    Therefore, $\phi$ is $(\|\theta\|_\infty \Lip_{\mathrm{TV}}(\beta)+ L)$ Lipschitz.

    For the second case let $L$ be a Lipschitz constant for $\theta$. Then 
    \[
\begin{aligned}
|\phi(x)-\phi(x')|&\leq |\E_{\beta_x}[\theta(x,\cdot )]-\E_{\beta_{x'}}[\theta(x,\cdot )]|+|\E_{\beta_{x'}}[\theta(x,\cdot )]-\E_{\beta_{x'}}[\theta(x',\cdot )]|\\
&\leq L\cdot \Lip_{W_1}(\beta)d(x,x')+L d(x,x'),\\
\end{aligned}
\]
so that $\phi$ is $L(\Lip_{W_1}(\beta)+1)$ Lipschitz. 
\end{proof}

In the sequel, we pay particular attention to the \emph{neutral belief} over $S$ defined as $\iota:X\to\P(Y)$, where for each $x\in X$ and $A\in\mathcal{B}(Y)$
\begin{equation}
    \label{eq:def neutral belief}
    \iota_x(A):=\frac{\lambda_x(A\cap S(x))}{\lambda_x(S(x))},
\end{equation} 
with $\lambda_x$ denoting the Lebesgue measure over the affine space generated by $S(x)$.
We will say that a belief $\beta$ over $S$ has \emph{density} $h$ if, $h$ is a strictly positive function over $X\times Y$ and for any $A\in \mathcal{B}(Y)$ we have
\begin{equation}
\label{eq:def belief with density}
    \beta_x(A):=\frac{\int_{A\cap S(x)}h(x,y)d\lambda_x(y)}{\int_{S(x)}h(x,y)d \lambda_x(y)}.
\end{equation}

\begin{remark}
    A natural example of these beliefs are those constructed from densities of the form 
    $h(x,y) = \exp(-\psi(x,y))$, where $\psi$ is Lipschitz continuous on $x$ and uniformly Lipschitz continuous on $y$. This family induces, 
     as a particular case, the beliefs given by \textit{truncated normal distributions} over $S(x)$. Indeed, it is enough to take 
    \[
        \psi(x,y) = \frac{1}{2}(y-\mu(x))^\top \Sigma(x)^{-1}(y-\mu(x)),
    \]
    provided $\mu(x)$ is Lipschitz and $\Sigma(x)$ is Lipschitz and uniformly positive 
    definite, i.e., $\Sigma(x) \geq \sigma I$ for some $\sigma > 0$ independent of $x$. Moreover, similar analysis can be done considering any classic parametric density $f(y;p)$ (like Exponential distributions of the form $f(y;\lambda):=1_{\{y\geq 0\}}\lambda e^{-\lambda y}$ or Gamma distributions $f(y;\alpha,\beta):= \Gamma(\alpha)^{-1}\beta^{\alpha}y^{\alpha-1}e^{-\beta y}$ for $y>0$), carefully restricting the ambient domain $Y$, and taking $p(x)$ as a regular enough varying parameter. The exact hypotheses on $Y$ and $p$ should be tuned so that $h(x,y):= f(y;p(x))$ remains Lipschitz on $x$ and uniformly Lipschitz on $y$.
\end{remark}

The following is a corollary of Proposition \ref{prop:reduction to neutral beliefs} showing that the analysis of beliefs with densities over $S$ can somehow be reduced to the neutral belief over $S$.
\begin{corollary}\label{cor:Reduction-to-neutral}
    Let $\iota$ be the neutral belief over $S$ and $\beta$ be another belief over $S$ with a density $h$.
    \begin{enumerate}
        \item If $\iota$ is Lipschitz with respect to $\dTV$ and $h$ is continuous and uniformly Lipschitz in the first variable, then $\beta$ is also Lipschitz with respect to $\dTV$.
        \item If $\iota$ is Lipschitz with respect to $\dW$ and $h$ is Lipschitz, then $\beta$ is also Lipschitz with respect to $\dW$.
    \end{enumerate}
\end{corollary}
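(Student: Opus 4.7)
The plan is to realize the corollary as a direct application of Proposition~\ref{prop:reduction to neutral beliefs} to weighted integrands coming from the density. By~\eqref{eq:def belief with density}, for any bounded measurable $f:Y\to\R$,
\[
\E_{\beta_x}[f] \;=\; \frac{\E_{\iota_x}[f(\cdot)\,h(x,\cdot)]}{\E_{\iota_x}[h(x,\cdot)]} \;=:\; \frac{\phi_f(x)}{\phi_1(x)}.
\]
Thus bounding $\dTV(\beta_x,\beta_{x'})$ (resp.\ $\dW(\beta_x,\beta_{x'})$) amounts to controlling $|\phi_f(x)-\phi_f(x')|$ uniformly over the test class $\{\|f\|_\infty\leq 1\}$ (resp.\ $\{\Lip(f)\leq 1\}$), together with $|\phi_1(x)-\phi_1(x')|$, and combining these estimates via the elementary quotient identity
\[
\frac{a}{b}-\frac{a'}{b'} \;=\; \frac{a-a'}{b} \,+\, \frac{a'}{b'}\cdot\frac{b'-b}{b}.
\]

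For part~1, I would fix $f$ measurable with $\|f\|_\infty\leq 1$ and apply the argument in the proof of Proposition~\ref{prop:reduction to neutral beliefs} part~1 to the integrand $\theta_f(x,y):=f(y)h(x,y)$. This integrand is measurable, bounded by $\|h\|_\infty$, and uniformly Lipschitz in the first variable with the same constant as $h$, all independently of $f$. Since the proof of Proposition~\ref{prop:reduction to neutral beliefs} part~1 only uses boundedness of the integrand together with its uniform Lipschitz continuity in $x$ (not joint continuity, as the $\dTV$ bound $|\E_\mu[g]-\E_\nu[g]|\leq\|g\|_\infty\dTV(\mu,\nu)$ applies to any bounded measurable $g$), it yields a Lipschitz estimate for $\phi_f$ with constant independent of $f$; the same applies to $\phi_1$ (the case $f\equiv 1$). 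Continuity and strict positivity of $h$ on the compact set $X\times Y$ give $h\geq h_{\min}>0$, so $\phi_1\geq h_{\min}$ and $|\phi_f|\leq\|h\|_\infty$. Substituting into the quotient identity and taking the supremum over $f$ produces the required $\dTV$-Lipschitz bound on $\beta$.

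For part~2, I would repeat the scheme with the test class $\{\Lip(f)\leq 1\}$; since $\dW$ is invariant under adding constants to the test function, I may further assume $\|f\|_\infty\leq\diam(Y)$. Then $\theta_f=f\cdot h$ is jointly Lipschitz on $X\times Y$ with constant uniform in $f$, so Proposition~\ref{prop:reduction to neutral beliefs} part~2 applied to $\theta_f$ (and to $h$ for $\phi_1$) gives uniform Lipschitz bounds on $\phi_f$ and $\phi_1$, and the quotient step closes the proof as in part~1. The main technical point is the uniformity in $f$ of the Lipschitz estimates for $\phi_f$; this is immediate from the explicit constants visible in the proof of Proposition~\ref{prop:reduction to neutral beliefs}. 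The strict positivity of the continuous function $h$ on the compact domain is what keeps the denominator $\phi_1$ bounded away from zero and is essential for the quotient step.
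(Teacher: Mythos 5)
Your proposal is correct and follows essentially the same route as the paper: write $\E_{\beta_x}[f]$ as the quotient $\E_{\iota_x}[fh(x,\cdot)]/\E_{\iota_x}[h(x,\cdot)]$, apply Proposition~\ref{prop:reduction to neutral beliefs} to numerator and denominator, and close with a quotient estimate using that $h$ is bounded away from zero. You actually supply two details the paper only asserts in passing --- the uniformity in $f$ of the Lipschitz constants, and the observation that for part~1 the integrand $f(y)h(x,y)$ need not be continuous but the proposition's proof only uses boundedness plus uniform Lipschitzness in $x$ --- so no changes are needed.
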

\begin{proof}
    Let $f:Y\to \R$ be measurable with $\|f\|_\infty \leq 1$ in the first case, and Lipschitz with $ \Lip (f)\leq 1$ in the second one. We then can write 
    \begin{equation}
    \label{eq:expected fraction}
        \begin{aligned}
            \E_{\beta_x}[f]=\frac{\E_{\iota_x}[h(x,\cdot)f(\cdot)]}{\E_{\iota_x}[h(x,\cdot)]}.
        \end{aligned}
    \end{equation}
    It follows from Proposition \ref{prop:reduction to neutral beliefs} that both the numerator and the denominator in \eqref{eq:expected fraction} are Lipschitz and hence by Lemma \ref{lem:lipschitz algebra} the map $x\mapsto \E_{\beta_x}[f]$ is Lipschitz, as $h$ and  $\E_{\iota_x}[h(x,\cdot)]$ are positive and bounded away from zero. The Lipschitz constant can be shown to be uniform over $f$, therefore yielding that $\beta$ is Lipschitz with respect to $\dTV$ in the first case and Lipschitz with respect to $\dW$ in the second.
\end{proof}

We end this section with an example showing that for the neutral belief, Lipschitz data is not sufficient to guarantee that the expected value is Lipschitz, and hence neither for the belief (due to Proposition \ref{prop:reduction to neutral beliefs}).

\begin{example}
\label{ex:lip svm nonlip belief}
    Let $S:[0,1]\tto [0,1]^2$ be given by 
    \begin{equation}
    \label{eq:example a(x)}
    S(x):=\conv\{(0,0),(1,0),(1,x),(a(x),x)\},\quad x\in [0,1]
    \end{equation}
    with $a(x):=\sqrt[4]{x}$. We observe that $S$ is 1-Lipschitz. Indeed, for $x'<x$ we have $S(x')\subseteq S(x)$ and so 
    \[
    \begin{aligned}
    d_H(S(x),S(x'))&=e(S(x),S(x'))=d((a(x),x),S(x'))=|x-x'|.
    \end{aligned}
    \]
    However, we shall see that $\iota$ the neutral belief over $S$ is not Lipschitz with respect to $\dW$. Indeed, consider the function $\theta(x,y)=y_1$, where $y_1$ is the first coordinate of $y=(y_1,y_2)\in\R^2$. It is clear that $\theta$ satisfies $\Lip(\theta)\leq 1$. We have that the volume of the trapezoid $S(x)$ is $\lambda(S(x))=(1-\sqrt[4]{x}/2)x$ and so
$$
\begin{aligned}    
\phi(x)=\E_{\iota_x}[\theta(x,\cdot)]&=\frac{1}{\lambda(S(x))}\int_{S(x)}y_1dy\\
&=\frac{2}{x(2-\sqrt[4]{x})}\int_0^x\int_{\frac{x}{\sqrt[4]{x}}y_2}^1y_1 dy_2dy_1 =\frac{3-\sqrt{x}}{6-3\sqrt[4]{x}}.
\end{aligned}
$$
We can compute the derivative of $\phi$ for $x>0$ as follows
    $$
\begin{aligned}
\phi'(x)&=\frac{(-\frac{1}{2}x^{-1/2})(6-3x^{1/4})-(3-x^{1/2})(-\frac{3}{4}x^{-3/4})}{(6-3\sqrt[4]{x})^2}\\
        &=\underbrace{\frac{1}{x^{\frac{3}{4}}}}_{\to\infty}\cdot\underbrace{\frac{-3x^{1/4}+\frac{3}{4}x^{1/2}+\frac{9}{4}}{(6-3\sqrt[4]{x})^2}}_{\to \frac{1}{16}}\to \infty \qquad \text{ as } x\to 0^+.
    \end{aligned}$$
    Since $\phi'$ is not bounded then $\phi$ cannot be Lipschitz. Moreover, we deduce using Proposition \ref{prop:reduction to neutral beliefs} part 2 that $\iota$ is not Lipschitz with respect to $\dW$. 
\hfill$\Diamond$
\end{example}

\begin{remark}
The set-valued map $S$ of \eqref{eq:example a(x)} is not only Lipschitz, but also \emph{rectangularly continuous}, which is an extra sufficient condition to deduce continuity of the neutral belief in \cite{salas2023existence}. Thus, $\phi$ is continuous but not Lipschitz. We also observe that for the chosen function $\theta$ the expected value corresponds to the first coordinate of the centroid of the trapezoid $S(x)$. In Figure \ref{fig:non lipschitz centroid of trapezoids}, some images of $S$ (trapezoids) are depicted along with their centroids $c(S(x))$, which can be computed as $$c(S(x))=\left(\frac{3-\sqrt{x}}{6-3\sqrt[4]{x}},  
\frac{x(3-2\sqrt[4]{x})}{
6-3\sqrt[4]{x}}
\right).$$ Figure \ref{fig:non lipschitz centroid of trapezoids} depicts the non-Lipschitzian property of the centroids.
\end{remark}

\begin{figure}[h]
\centering
\begin{minipage}{0.54\textwidth}
    \tdplotsetmaincoords{80}{140}
			\begin{tikzpicture}[scale=3.5,,>=latex,
				tdplot_main_coords,
				axis/.style={-stealth,very thick}
				]
				\draw[thick,dashed] (0,-1,0)--(-1,-1,0)--(-1,0,0);
				\draw[thick] (-1,0,0)--(0,0,0)-- (0,-1,0) -- (-1,-1,1)--(-1,0,0);
				\draw[dashed,thick] (-1,-1,1)--(-1,-1,0);
				\draw[domain=0:1,samples=500,smooth,variable=\x,thick] plot ({-\x},-{\x^(1/4)},{\x});

                \fill[fill = yellow, fill opacity=0.3] (-1,-1,0)--(-1,0,0)--(-1,-1,1)--cycle;

				\draw[fill = green, fill opacity=0.3] (-0.6561,-1,0)--(-0.6561,-1,0.6561)--(-0.6561,-0.9,0.6561)--(-0.6561,0,0) --cycle;

                \draw[fill = cyan, fill opacity=0.3] (-0.2401,-1,0)--(-0.2401,-1,0.2401)--(-0.2401,-0.7,0.2401)--(-0.2401,0,0) --cycle;


                \def\ox{0.8};
                \def\oz{1};
				\draw[gray,->] (\ox,0,\oz) -- ({\ox-0.3},0,\oz) node[black,right]  {$x$};
				\draw[gray,->] (\ox,0,\oz) -- (\ox,-0.3,\oz) node[black,left]  {$y_1$};
				\draw[gray,->] (\ox,0,\oz) -- (\ox,0,\oz+0.3) node[black,above]  {$y_2$};	

                \foreach \t in {0.3,0.35,...,1}
                \draw[gray, opacity=0.3] (0,-\t,0) -- ({-\t^4},{-\t},{\t^4});
                
                \foreach \t in {0.05,0.1,...,0.95,0.96}
                \draw[domain=0:1,samples=500,smooth,variable=\x,thick,gray,opacity=0.3] plot ({-\x},-{\t*\x^(1/4)},{\t*\x});

                \foreach \x in {0.01,0.04,...,0.99}
                \draw[gray, opacity = 0.3] ({-\x},0,0)--({-\x},{-(\x^(1/4))},{\x})--({-\x},-1,{\x});
                
                \foreach \x\y in {1/yellow,.9/green,.7/cyan}
                \filldraw[fill=\y] ({-\x^4},{-(3-\x^2)/(6-3*\x)},{\x^4*(3-2*\x)/(6-3*\x)}) circle (.5pt); 
                \draw[thick, samples=500, domain=0:1, smooth, variable=\x, black] plot ({-\x^4},{-(3-\x^2)/(6-3*\x)},{\x^4*(3-2*\x)/(6-3*\x)}); 
                \fill[black] (0,-1/2,0) circle (.5pt);
			\end{tikzpicture}
\end{minipage}
\begin{minipage}{0,45\textwidth}
\begin{tikzpicture}[scale=4,>=latex]
    \draw[gray,->] (0,0) -- (1.1,0) node[right,black] {$y_1$};
    \draw[gray,->] (0,0) -- (0,1) node[above,black] {$y_2$};
    
\foreach \x\y in {1/yellow,.9/green,.7/cyan}
    \filldraw[thick,gray!30!white, fill=\y!30!white] (0,0) -- (1,0) -- (1,{\x^4}) -- (\x,{\x^4}) -- cycle;

\foreach \x\y in {1/yellow,.9/green,.7/cyan}
    \filldraw[fill=\y] ({(3-\x^2)/(6-3*\x)},{\x^4*(3-2*\x)/(6-3*\x)}) circle (.5pt); 

    \fill[black] (1/2,0) circle (.5pt);

\draw[thick, scale=1, domain=0:1, smooth, variable=\x, black] plot ({(3-\x^2)/(6-3*\x)},{\x^4*(3-2*\x)/(6-3*\x)}); 
\end{tikzpicture}
\end{minipage}
\caption{Overlapped values $S(1)$, $S(0.9^4)$ and $S(0.7^4)$ of the set-valued map $S$ of Example \ref{ex:lip svm nonlip belief} and their centroids, depicted with circles of the same color, exhibiting a non-Lipschitz behavior as $x\to 0$.}
\label{fig:non lipschitz centroid of trapezoids}
\end{figure}
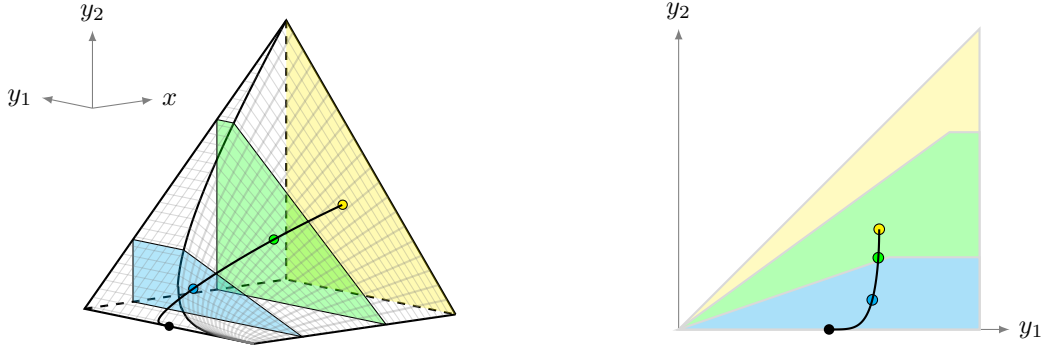


\section{Some properties on the space of compact convex sets}\label{sec:Convex}

To derive the main results present in Section \ref{sec:lipschitz beliefs}, we need to study how the Lebesgue measure behaves over the family of nonempty compact convex sets. Recall that $Y\subset \R^m$ is a nonempty convex compact set. Let us denote
\begin{equation}\label{eq:Def-SpaceOfConvexSets}
    \D_Y := \{ K\subset Y\,:\, K\text{ is nonempty, convex and compact} \}.
\end{equation}
We observe that, $\D_Y$ endowed with the Hausdorff $d_H$ is a compact geodesic metric space (as a consequence of \cite[Theorem 4.18]{rockafellar2009variational}) and \cite[Proposition 1]{serra1998hausdorff}). 

\subsection{Some Lipschitz maps on the space of convex sets} In this section, we revise some maps related with the space $\mathcal{D}_Y$ verifying Lipschitz continuity. 
\begin{lemma}\label{lem:diam lip}
    The function $\diam:\D_Y\to \R$ given by $\diam(A):=\sup\{ \|x-y\| : x,y\in A \}$ is 2-Lipschitz.
\end{lemma}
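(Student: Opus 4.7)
The plan is to use a direct triangle-inequality argument, exploiting that the diameter of a compact set is attained at a pair of extremal points that can be ``transported'' from $A$ to $B$ by using that every point of $A$ is within $d_H(A,B)$ of a point of $B$.

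More concretely, fix $A,B\in\D_Y$. Since $A$ is compact, there exist $x,y\in A$ with $\|x-y\|=\diam(A)$. Since $B$ is compact as well, and $d(x,B)\leq e(A,B)\leq d_H(A,B)$, I can choose $x'\in B$ with $\|x-x'\|=d(x,B)\leq d_H(A,B)$, and similarly $y'\in B$ with $\|y-y'\|\leq d_H(A,B)$. A single application of the triangle inequality then yields
\[
\diam(B)\;\geq\;\|x'-y'\|\;\geq\;\|x-y\|-\|x-x'\|-\|y-y'\|\;\geq\;\diam(A)-2d_H(A,B).
\]
Swapping the roles of $A$ and $B$ (using that $d_H$ is symmetric) gives the reverse inequality $\diam(A)\geq \diam(B)-2d_H(A,B)$, and therefore $|\diam(A)-\diam(B)|\leq 2d_H(A,B)$, which is exactly the claim.

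There is essentially no obstacle here: the only subtlety is making sure the diameter is \emph{attained} (so that we pick honest pairs of points rather than having to carry an $\varepsilon$ around), which is immediate from compactness of $A$ and $B$ and continuity of the norm on $A\times A$. Should one wish to avoid invoking attainment, the same argument works with a vanishing $\varepsilon>0$ added to each of the two distances $\|x-x'\|$ and $\|y-y'\|$, and the conclusion follows by letting $\varepsilon\to 0^+$.
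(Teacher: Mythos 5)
Your proof is correct and follows essentially the same route as the paper: pick a (near-)diametral pair in $A$, replace each point by a nearest point of $B$, and apply the triangle inequality to get $\diam(A)\leq \diam(B)+2d_H(A,B)$, then symmetrize. The only cosmetic difference is that you invoke attainment of the diameter via compactness, whereas the paper carries an $\varepsilon$ and lets it vanish — a variant you already note yourself.
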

\begin{proof}
    Let $A,B\in \D_Y$. For $\varepsilon>0$, let $x,y\in A$ such that $\diam(A)\leq \|x-y\|+\varepsilon$. We see that
    \[ 
    \begin{aligned}
        \diam(A)&\leq \|x-y\|+\varepsilon \\
        & \leq \|x-\proj(x,B)\| + \|\proj(x,B)-\proj(y,B)\| + \|\proj(y,B)-y\| +\varepsilon \\
        & \leq 2 d_{H}(A,B) + \diam(B) + \varepsilon.
    \end{aligned}
    \]
    Since last inequality is valid for any $\varepsilon>0$, we deduce that $\diam(\cdot)$ is 2-Lipschitz.
\end{proof}

\begin{lemma}\label{lem:lipvolume}
    The volume function (Lebesgue measure) $\lambda$ restricted to $\D_Y$, which for every $A\in \D_Y$ assigns the full-dimensional Lebesgue measure of $A$, $\lambda(A)$, is Lipschitz with respect to $d_H$. The Lipschitz constant satisfies $\Lip(\lambda)\leq L_{Y,m}$ where
    \begin{equation}\label{eq: LipConstant-Lebesgue}
    L_{Y,m}:=  2m\lambda(\Ball_m)\left(\diam(Y)\sqrt{\frac{m}{2(m+1)}}\right)^{m-1}.
    \end{equation}
    Moreover, for any $C\in \D_Y$ and $L>L_{Y,m}$ there exists $\delta>0$ such that 
    \[
    \lambda(D\Delta E)\leq L d_H(D,E)\quad \forall D,E\in B_{\D_Y}(C,\delta).
    \]
\end{lemma}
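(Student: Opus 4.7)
The plan is to first establish the ``moreover'' local estimate and then bootstrap it into the global Lipschitz bound using the geodesic structure of $(\mathcal{D}_Y,d_H)$ from Proposition~\ref{prop: Convex geodesic}. The core analytic ingredient is the Minkowski--Steiner identity, which states that for any convex body $K$ and $\varepsilon>0$,
\[
\lambda(K+\varepsilon\mathbb{B}) - \lambda(K) = \int_0^\varepsilon \mathcal{H}^{m-1}(\partial(K+t\mathbb{B}))\,dt,
\]
together with two classical facts from convex geometry: (i) Jung's theorem, which tells us every $K\in\mathcal{D}_Y$ sits in a closed Euclidean ball of radius at most $R_Y:=\diam(Y)\sqrt{m/(2(m+1))}$ (this is exactly where the awkward constant in $L_{Y,m}$ comes from), and (ii) the monotonicity of the surface area functional for convex bodies under inclusion, provable e.g.\ via Cauchy's projection formula. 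Together with the trivial inclusion $K+t\mathbb{B}\subseteq \overline{B}(c_K, r_K+t)$ where $r_K$ is the Jung radius of $K$, these give the pointwise bound $\mathcal{H}^{m-1}(\partial(K+t\mathbb{B})) \leq m\lambda(\mathbb{B})(r_K+t)^{m-1}$.

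The local estimate then proceeds as follows. Fix $C\in\mathcal{D}_Y$ and $L>L_{Y,m}$. For $D,E\in B_{\mathcal{D}_Y}(C,\delta)$, set $\varepsilon:=d_H(D,E)\leq 2\delta$ and use the inclusions $D\setminus E\subseteq (E+\varepsilon\mathbb{B})\setminus E$ and $E\setminus D\subseteq (D+\varepsilon\mathbb{B})\setminus D$ to write
\[
\lambda(D\Delta E) \leq \bigl(\lambda(D+\varepsilon\mathbb{B})-\lambda(D)\bigr) + \bigl(\lambda(E+\varepsilon\mathbb{B})-\lambda(E)\bigr).
\]
Since $D,E$ both lie in a ball of radius at most $r_C+\delta\leq R_Y+\delta$, the Minkowski--Steiner identity and the surface-area bound above yield
\[
\lambda(D\Delta E) \leq 2\int_0^\varepsilon m\lambda(\mathbb{B})(R_Y+\delta+t)^{m-1}\,dt \leq 2m\lambda(\mathbb{B})\varepsilon(R_Y+3\delta)^{m-1}.
\]
Since $L>L_{Y,m}=2m\lambda(\mathbb{B})R_Y^{m-1}$, choosing $\delta>0$ small enough so that $2m\lambda(\mathbb{B})(R_Y+3\delta)^{m-1}\leq L$ proves the moreover statement.

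For the global Lipschitz claim, note that $|\lambda(D)-\lambda(E)|\leq \lambda(D\Delta E)$, so the local bound above forces $\calm(\lambda,C)\leq L$ at every $C\in\mathcal{D}_Y$ for every $L>L_{Y,m}$. Since $(\mathcal{D}_Y,d_H)$ is geodesic by Proposition~\ref{prop: Convex geodesic} (hence $1$-quasiconvex), Lemma~\ref{lem:punctual to local lip}(1) yields that $\lambda$ is $L$-Lipschitz on all of $\mathcal{D}_Y$ for every such $L$, and taking $L\searrow L_{Y,m}$ delivers $\Lip(\lambda)\leq L_{Y,m}$.

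The main obstacle I expect is the clean invocation of the Minkowski--Steiner identity and of surface-area monotonicity: both are classical, but they live in the somewhat specialized toolbox of convex/integral geometry, and some care is needed on the stratum where $K$ fails to be full-dimensional. There the left-hand side of the identity is simply $\lambda(K+\varepsilon\mathbb{B})$ and the ``surface area of $K$'' at $t=0$ degenerates to $0$, but the integrand bound $m\lambda(\mathbb{B})(r_K+t)^{m-1}$ still applies and the estimate goes through unchanged.
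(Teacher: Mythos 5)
Your proposal is correct and follows essentially the same route as the paper: bound $\lambda(D\Delta E)$ by the volume of the $\varepsilon$-tube via the symmetric-difference inclusions, control the tube volume with the Steiner/coarea identity, Jung's theorem, and monotonicity of surface area of convex bodies, and then pass from the resulting uniform calmness bound to global Lipschitzness using the geodesic structure of $(\D_Y,d_H)$ and Lemma~\ref{lem:punctual to local lip}. The only cosmetic difference is that you apply Jung's theorem to the center $C$ and inflate by $\delta$, whereas the paper applies it directly to $E$ using $\diam(E)\leq\diam(Y)$; your explicit remark on the lower-dimensional stratum is a welcome clarification that the paper leaves implicit.
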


\begin{proof}
    Without loss of generality, we can assume that $Y$ has nonempty interior. Let $C\in \D_Y$ and $\delta>0$, and take $D,E\in B_{\D_Y}(C,\delta)$. We note that
    \begin{equation}\label{eq:diference of values}
\begin{aligned}
    |\lambda(D)-\lambda(E)|&\leq \lambda(D\Delta E)=\lambda(D\setminus E)+\lambda(E\setminus D).
        \end{aligned}
    \end{equation}
    Let us bound the first term on the right hand side of \eqref{eq:diference of values}.
    Let $\varepsilon:=d_H(D,E)$. Then, we have $D\subseteq E+\varepsilon\Ball_m$ and so 
    \begin{equation}
        \label{eq:inclusionofdifferences}
   D\setminus E\subseteq (E+\varepsilon\Ball_m)\setminus E.
    \end{equation}
    By Jung's Theorem \cite{jung1901ueber}, we know that there exists a ball of radius \begin{equation}\label{eq:Jung inequality}
        r\leq \diam(E)\sqrt{\frac{m}{2(m+1)}}
    \end{equation} that encloses $E$, so for some point $z\in \R^m$ we have $E\subseteq B(z,r)$. Moreover, since $Y$ is compact, we have that $\diam(E)\leq\diam(Y)<+\infty$, and hence $r\leq \diam(Y)\sqrt{\frac{m}{2(m+1)}}$. Given any $t\in[0,\varepsilon]$ we have $E+t\Ball_m\subseteq \overline{B}(z,r+t)$ and by virtue of the monotonicity of perimeters of compact convex sets (see, e.g.  \cite[Lemma 2.4]{buttazzo1995minimum}) we have that
    \begin{equation}\label{eq:perimeter inequality}
        \haus_{m-1}(\partial (E+t\Ball_m)) \leq  \haus_{m-1}(\partial B(z,r+t)).
    \end{equation}
    Using the coarea formula (see e.g. \cite[Theorem 3.10]{evans2015measure}) we deduce that
    $$\begin{aligned}
        \lambda(D\setminus E)\leq \lambda(E+\varepsilon\Ball_m\setminus E)
        &=\int_0^\varepsilon \haus_{m-1}(\partial (E+t\Ball_m))dt \\
        &\leq \int_0^\varepsilon \haus_{m-1}(\partial (B(z,r+t))dt \\ 
        &=\lambda(B(z,r+\varepsilon)\setminus B(x,r))\\
        &= \haus(\Ball_m) [ (r+\varepsilon)^m-r^m] = \haus(\Ball_m)p(r,\varepsilon)\varepsilon,
    \end{aligned}$$
    where
    \( p(r,\varepsilon) = \sum_{i=1}^{m} \binom{m}{i} \varepsilon^{i-1} r^{m-i}.  \)
    Notice that since $0<r\leq \diam(Y)\sqrt{\frac{m}{2(m+1)}}$ and $0<\varepsilon=d_H(D,E)\leq 2\delta$, $p(r,\varepsilon)$ can be bounded by a polynomial $q$ on $\delta$, with $$q(0)=m\left(\diam(Y)\sqrt{\frac{m}{2(m+1)}}\right)^{m-1}.$$
    The second term in \eqref{eq:diference of values} can be bounded in an analogue manner. Putting all together and recalling that $\varepsilon=d_H(D,E)\leq 2\delta$ we have
    \begin{equation*}
        |\lambda(D)-\lambda(E)|\leq \lambda(D\Delta E)\leq 2\lambda(\Ball_m)q(\delta)d_H(D,E),
    \end{equation*}
    so that $2\lambda(\Ball_m)q(\delta)$ is a local Lipschitz constant for $\lambda$. Moreover, taking limit $\delta\to 0$ we deduce that the local Lipschitz number of $v$ at $C$ satisfies
    \[
    \Lip(\lambda,C)\leq 2\haus(\Ball_m)m\left(\diam(Y)\sqrt{\frac{m}{2(m+1)}}\right)^{m-1}.
    \]
    Since $(\D_Y,d_H)$ is a geodesic space, the conclusion follows by Lemma \ref{lem:punctual to local lip}.
\end{proof}


Given $u\in \sph_{m-1}$ and $K\in \D_Y$, we define $ r_K(u):=\sup\{t\geq 0: tu\in K\}$. The functional $r_K$ coincides with  the reciprocal of the Minkowski functional of $K$, restricted to $\sph_{m-1}$. 
We observe that if $0\in K$ then
$0\leq r_K(u)\leq \diam(Y)<+\infty$. 
We also define the \textbf{inner radius} of $K$ as
\begin{equation}
    r(K) := \inf\{r_K(u)\colon u\in \sph_{m-1}\cap \spn(K-K)\},
    \label{eq:inner radius}
\end{equation}
which, assuming $0\in \ri(K)$, satisfies 
$0<r(K)\leq\diam(Y)<\infty.$ Also, note that if $0\in \ri(K)$, then
\begin{equation}\label{eq:internal radius ref}
    r(K) = \max\{ r\colon \overline{B}(0,r)\cap \spn(K)\subset K\} = d(0,K\setminus\ri(K)).
\end{equation}

\begin{lemma}
\label{lem:inner radius lip}
Let $K\in \D_Y$ and assume $0\in\inte(K)$. Then the function $A\in \D_Y\mapsto r_A(u)$ is locally Lipschitz around $K$ uniformly in $u\in \sph_{m-1}$, that is, there exists $\delta, L>0$ such that $A,B\in \D_Y$ with $d_H(A,K),d_H(B,K)\leq \delta$ implies
\[
|r_A(u)-r_B(u)|\leq Ld_H(A,B)\qquad \forall u\in \sph_{m-1}.
\]
\end{lemma}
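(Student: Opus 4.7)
The plan is to bound $|r_A(u) - r_B(u)|$ via a pair of symmetric inflation inclusions between $A$ and $B$, secured by a uniform inner ball around the origin. Since $0\in\inte(K)$, I would first pick $\rho>0$ with $\overline{B}(0,2\rho)\subset K$, and take $\delta:=\rho$. The first step — which I expect to be the main obstacle — is to upgrade Hausdorff closeness to $K$ into inner-ball preservation: for every $A\in\D_Y$ with $d_H(A,K)\leq \delta$, one has $\overline{B}(0,\rho)\subset A$. This is where convexity is essential. If some $y\in\overline{B}(0,\rho)\setminus A$ existed, set $p=\proj_A(y)$, $u=(y-p)/\|y-p\|$, and consider $y':=y+\rho u$. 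Then $\|y'\|\leq 2\rho$ so $y'\in K$, giving $d(y',A)\leq d_H(K,A)\leq \rho$; but the supporting hyperplane at $p$ for $A$ gives $\langle u,a\rangle\leq \langle u,p\rangle$ for all $a\in A$, whence $d(y',A)\geq \langle u, y'-p\rangle = \rho+\|y-p\|>\rho$, a contradiction.

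Having $\overline{B}(0,\rho)\subset A\cap B$, I would then establish the standard Minkowski-rounding inclusion $A\subset (1+\varepsilon/\rho)B$, where $\varepsilon:=d_H(A,B)$. Indeed, any $a\in A\subset B+\varepsilon\Ball$ decomposes as $a=b+e$ with $b\in B$ and $\|e\|\leq \varepsilon$, and then
\[
\frac{a}{1+\varepsilon/\rho} \;=\; \frac{\rho}{\rho+\varepsilon}\,b \;+\; \frac{\varepsilon}{\rho+\varepsilon}\cdot\frac{\rho}{\varepsilon}\,e
\]
expresses the scaled point as a convex combination of $b\in B$ and $(\rho/\varepsilon)e\in\overline{B}(0,\rho)\subset B$, and so by convexity it lies in $B$.

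Finally, applying this inclusion to the boundary point $r_A(u)u\in A$ gives $r_A(u)u/(1+\varepsilon/\rho)\in B$, hence $r_B(u)\geq r_A(u)/(1+\varepsilon/\rho)$, and therefore
\[
r_A(u)-r_B(u) \;\leq\; r_A(u)\cdot \frac{\varepsilon/\rho}{1+\varepsilon/\rho} \;\leq\; \frac{\diam(Y)}{\rho}\, d_H(A,B).
\]
Exchanging the roles of $A$ and $B$ yields the reverse estimate, producing the uniform (in $u\in\sph_{m-1}$) Lipschitz constant $L=\diam(Y)/\rho$ on the neighborhood of radius $\delta=\rho$. The Minkowski-rounding calculation in the second step is routine; the real content sits in the convexity-driven inner-ball preservation, which is the sole reason $r_A(u)$ remains uniformly bounded away from $\infty$ and $0$ as $A$ varies near $K$.
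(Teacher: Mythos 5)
Your proof is correct, and it takes a genuinely different route from the paper's. Both arguments share the same preliminary step of showing that Hausdorff-closeness to $K$ preserves a uniform inner ball at the origin (the paper does this with a normal-cone/projection argument to bound the inner radius $r(A)$ from below by $r(K)/2$; you do it with a supporting-hyperplane contradiction, which is essentially equivalent). Where you diverge is in the main estimate: the paper takes $v=r_A(u)u$, projects it onto $B$, and splits into two cases according to whether $\proj_B(v)$ is collinear with $u$, handling the non-collinear case with a trapezoid and a similar-triangles computation involving the inner radius $r(B)$. You instead prove the scaling inclusion $A\subseteq(1+\varepsilon/\rho)B$ for $\varepsilon=d_H(A,B)$ via the convex combination $\tfrac{\rho}{\rho+\varepsilon}b+\tfrac{\varepsilon}{\rho+\varepsilon}\cdot\tfrac{\rho}{\varepsilon}e$, and read off the one-sided bound $r_B(u)\geq r_A(u)/(1+\varepsilon/\rho)$ directly, with no case analysis. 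The resulting constant $\diam(Y)/\rho$ matches the paper's $2\diam(Y)/r(K)$ when $\rho$ is taken maximal, so nothing is lost quantitatively; your route is arguably cleaner since the whole geometric content is concentrated in one standard Minkowski-rounding inclusion. (Only cosmetic caveat: the expression $\tfrac{\rho}{\varepsilon}e$ requires $\varepsilon>0$, but the case $\varepsilon=0$ is trivial since then $A=B$.)
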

\begin{proof}
    We first claim that  $r:\mathcal{D}_{Y}\to \R$ which assigns to $A\in \D_Y$ the inner radius $r(A)$ is bounded below away from $0$ in some neighborhood of $K$. We note that since $0\in\inte(K)=\ri(K)$ then using \eqref{eq:internal radius ref} we have $\overline{B}(0,r(K))\subset K.$

    Indeed, take $\delta:=\frac{r(K)}{2}>0$ and consider $A\in \D_Y$ such that $d_H(K,A)\leq \delta$. We will show that $r(A)\geq \delta$. Using the definition of $r(A)$ we may take $x\in \partial A$ such that $\|x\|=r(A)$. We observe that $0\in \inte(A)$. Otherwise, by a separation argument there exists $\xi\in\R^m$ with $\|\xi\|=1$ such that
    \begin{equation}
    \label{eq: VI xi}    
    \langle \xi,z\rangle\leq 0 \quad \forall z\in A.
    \end{equation}
    We define $w:=\xi \cdot r(K)$ which satisfies $\|w\|=r(K)$ and so also $w\in K$. Clearly, from \eqref{eq: VI xi} the projection of $w$ on $A$ is 0, and so we obtain $$r(K)=\|w-0\|=d(w,A)\leq d_H(K,A)\leq \delta=\frac{r(K)}{2}$$ which is a contradiction, since $r(K)>0$. 
    
    Therefore, since $\overline{B}(0,r(A))\subset A$ then the normal cones (in the sense of convex analysis, see, e.g., \cite{rockafellar2009variational}) to these sets satisfy 
    \begin{equation}
    \label{eq:normal cones in proof}
    N_A(x)\subset N_{\overline{B}(0,r(A))}(x)=\R_+ x.
    \end{equation}
    Since $x\in\partial A$ and $A$ is convex in finite dimension, then the normal cone $N_A(x)$ is nontrivial, that is, it contains nonzero directions. By \eqref{eq:normal cones in proof}, we deduce that $N_A(x) = \R_+ x$.
    Now consider $z:=x\frac{r(K)}{r(A)}\in \overline{B}(0,r(K))\subset K$. Without loss of generality, we can assume that $r(A)<r(K)$ and so $z\notin A$. Then we must have $\proj(z,A)=x$ and $d(z,A)=\|z-x\|$. Thus, 
    \[
    d_H(K,A)\geq e(K,A)\geq \|z-x\|=r(K)-r(A),
    \]
    and therefore we obtain that $r(A)\geq r(K)-\frac{r(K)}{2}=\delta>0$.
    
    Now consider $A,B\in \mathcal{D}_Y$ such that $d_H(A,K),d_H(B,K)\leq \delta$ and $u\in\sph_{m-1}$. Suppose without loss of generality that $r_{A}(u) > r_{B}(u)$. We observe that $v:=r_A(u)\cdot u$ belongs to $A$, and so $0<d(v,B)=\|v-p\|\leq d_H(A,B)$, where $p:=\proj(v,B)$. We distinguish two cases. First, $p\in\R u$, see Figure~\ref{fig:TwoCases}. In this case, we deduce that $p = r_B(u)u$ and so
        \[
        r_A(u) - r_B(u) = \|v-p\| = d(v,B) \leq d_H(A,B).
        \]
        Second, $p\notin \R u$. We then can define $w:=r(B)\frac{v-p}{\|v-p\|}$, which, by definition of $r(B)$, verifies $w\in B$. Note that $[0,w]$ and $[p,v]$ are parallel segments, and so $\{0,w,v,p\}$ are the vertices of a trapezoid in the plane generated by $p$ and $v$. Thus the diagonals of this trapezoid $[w,p]$ and $[0,v]$ intersect at a unique point, which we will denote by $b$, see Figure~\ref{fig:TwoCases}. 

    \begin{figure}[h]
\centering
\begin{minipage}{0.45\textwidth}
\centering
			\begin{tikzpicture}[scale=4,>=latex
				]
                \clip (-0.2,-0.1) rectangle (0.7,0.7);
				\draw (0,0) circle (5pt);
                \draw (0,0) circle (12pt);
                \node (B) at (-0.1,0.2) {$B$};
                \node (A) at (0.3,0.4) {$A$};
                \draw[thick,blue] (0,0) -- (0.37,0.2);
                \draw[dashed,red] (-1.2*0.37,-1.2*0.2) -- (1.5*0.37,1.5*0.2) node[below]{\small $\R u$};
                \filldraw[blue] (0.37,0.2) circle (0.4pt) node[above, xshift=3, yshift=1] {$v$};
                \filldraw[blue] (0.42*0.37,0.42*0.2) circle (0.4pt) node[above, xshift=3, yshift=1] {$p$};
			\end{tikzpicture}
\end{minipage}
\hfill
\begin{minipage}{0.45\textwidth}
\centering
\begin{tikzpicture}[scale=4,>=latex]
                
                \clip (-0.2,-0.1) rectangle (0.7,0.7);
                \draw (0,0) circle (12pt);
                \draw (0,0) ellipse (3pt and 10pt);
                \node (B) at (-0.15,0.15) {$B$};
                \node (A) at (0.3,0.4) {$A$};
                \draw[thick,blue] (0,0) -- (0.37,0.2);
                \draw[dashed,red] (0.1,0.16) -- (0.37,0.2);
                \draw[dashed,red] (0,0) -- (0.43*0.27,0.43*0.04) node[right] {$w$};
                \filldraw[red] (0.38*0.27,0.38*0.04) circle (0.4pt); 
                
                \filldraw[blue] (0.37,0.2) circle (0.4pt) node[above, xshift=3, yshift=1] {$v$};
                \filldraw[blue] (0.09,0.16) circle (0.4pt) node[above, xshift=3, yshift=1] {$p$};
                
                \draw[red,thick] (0.38*0.27,0.38*0.04)--(0.09,0.16);
                \filldraw[black] (0.27*0.37,0.27*0.2) circle (0.4pt) node[above left] {$b$};

\end{tikzpicture}
\end{minipage}
\caption{Illustration of $v$, $p = \proj(v,B)$ To the left, the case where $p$ is colinear with $v$. To the right, the construction of $b$.}
\label{fig:TwoCases}
\end{figure}
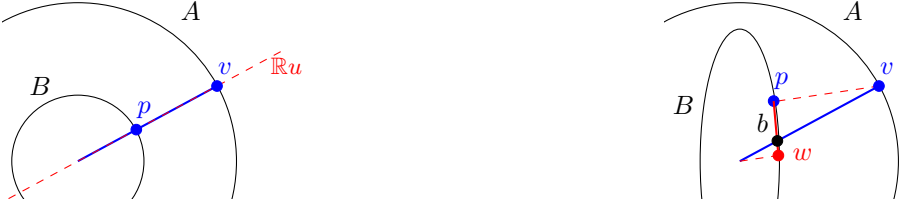
By similarity of triangles we have 
    \[
    \frac{\|v-b\|}{d(v,B)}=\frac{\|b\|}{r(B)}.
    \]
    Since $b\in B$ and  
    $b$ is parallel to $v$, we have that
    $\|b\|\leq r_B(u)$ and so 
    \begin{equation}
    \begin{aligned}    
        r_A(u)-r_B(u)\leq r_A(u)-\|b\|
        &= \|v-b\|\\
        &=  \frac{\|b\|}{r(B)}d(v,B)\\
        &\leq \frac{r_B(u)}{r(B)} d_H(A,B)\leq \frac{\diam(B)}{r(B)} d_H(A,B).
    \end{aligned}
    \end{equation}
    In both cases, we deduce that $r_A(u) - r_B(u) \leq \frac{\diam(Y)}{r(B)} d_H(A,B)$.

    Therefore, a Lipschitz constant for $A\mapsto r_A(u)$ uniformly on $u$ around $K$ is 
    \[ 
    L_{K}:=\sup\left\{\frac{\diam(A)}{r(A)} : {d_{H}(A,K)\leq r(K)/2} \right\} \leq \frac{2\diam(Y)}{r(K)}. 
    \]
\end{proof}

\subsection{Lipschitz selections} Recall that for a set-valued map $T:X\tto Y$, a selection of $T$ is a map $\tau:X\to Y$ verifying that $\tau(x)\in T(x)$ for every $x\in X$. Our aim in this section is to study some Lipschitz selections that we will need in the sequel. We base our developments over the Steiner points, which is a standard tool to produce Lipschitz selections (see, e.g., \cite[Chapter 9]{Aubin2009-ox}). For a set $A\in \D_Y$, we denote by $s_m(A)$ the \emph{Steiner point} (or curvature centroid) of $A$, which is defined by
\begin{equation}
    s_m(A):=\frac{1}{\lambda(\Ball_m)}\int_{\sph_{m-1}}u\sigma_A(u)d\haus_{m-1}(u)
\end{equation}
where $\sigma_A(u):=\sup_{a\in A}\langle u, a\rangle$ is the support functional of $A$. The following proposition shows that $s_m$ is Lipschitz, as a map from $\D_Y$ to $Y$.

\begin{proposition}(\cite[Theorem 9.4.1]{Aubin2009-ox})
\label{prop:steiner selection}
    For every $A\in\D_Y$, $s_{m}(A)\in \ri(A)$. Moreover, the map $s_{m}:(\D_Y,d_H)\to Y$ is $m$-Lipschitz, that is, for every $A,B\in\D_Y$
    \[ 
    \|s_{m}(A)-s_{m}(B)\| \leq m d_{H}(A,B). 
    \]
\end{proposition}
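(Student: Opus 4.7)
My plan is to treat the two claims separately. For the inclusion $s_m(A)\in \ri(A)$, the standard route goes via showing that the Steiner functional is translation equivariant and Minkowski additive: $s_m(A+c)=s_m(A)+c$ and $s_m(A+B)=s_m(A)+s_m(B)$, which follow from $\sigma_{A+c}(u)=\sigma_A(u)+\langle u,c\rangle$ together with the classical moment identity $\int_{\sph_{m-1}}u_iu_j\, d\haus_{m-1}(u)=\lambda(\Ball)\delta_{ij}$. In particular, this identity yields $s_m(\{c\})=c$. Combining these with the symmetry principle $s_m(-A)=-s_m(A)$ (direct from $\sigma_{-A}(u)=\sigma_A(-u)$ and substitution $u\mapsto -u$), one reduces to the case of an origin-symmetric body where $s_m(A)=0$ lies in $\ri(A)$ by symmetry. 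Since this part is purely classical convex geometry, I would ultimately cite \cite[Theorem 9.4.1]{Aubin2009-ox} for the full argument rather than reproducing it in detail.

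For the Lipschitz estimate, which is the core computation, my plan is a direct estimate on the integral representation of $s_m$. The key ingredient is the well-known fact that support functions are $1$-Lipschitz uniformly in the direction: for every $u \in \sph_{m-1}$, $|\sigma_A(u) - \sigma_B(u)| \leq d_H(A,B)$. Indeed, picking $a\in A$ attaining $\sigma_A(u)$ (which exists by compactness) and letting $p=\proj_B(a)$, one has $\sigma_A(u)=\langle u,a\rangle = \langle u,p\rangle+\langle u,a-p\rangle\leq \sigma_B(u)+\|a-p\|\leq \sigma_B(u)+d_H(A,B)$, and the bound follows by symmetrizing in $A,B$.

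With this uniform Lipschitz estimate on the support functions, the difference $s_m(A)-s_m(B)$ can be written as a single integral over $\sph_{m-1}$, and pulling the norm under the integral (using $\|u\|=1$) yields
\[
\|s_m(A)-s_m(B)\| \;\leq\; \frac{1}{\lambda(\Ball)}\int_{\sph_{m-1}}|\sigma_A(u)-\sigma_B(u)|\, d\haus_{m-1}(u) \;\leq\; \frac{\haus_{m-1}(\sph_{m-1})}{\lambda(\Ball)}\,d_H(A,B).
\]
The final step is to invoke the classical identity $\haus_{m-1}(\sph_{m-1})=m\,\lambda(\Ball)$ in $\R^m$, which yields exactly the constant $m$. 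I do not foresee any real obstacle in the Lipschitz part; the only delicate point is the inclusion $s_m(A)\in\ri(A)$ in possibly lower-dimensional images, for which the cited reference is the most economical route.
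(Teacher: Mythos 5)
The paper does not prove this proposition at all; it is stated as a direct citation of \cite[Theorem 9.4.1]{Aubin2009-ox}, so there is no in-paper argument to compare against. Your derivation of the Lipschitz bound is correct and self-contained: the uniform estimate $|\sigma_A(u)-\sigma_B(u)|\leq d_H(A,B)$ for $u\in\sph_{m-1}$ is the standard characterization of the Hausdorff distance via support functions, pulling the norm inside the integral is legitimate since $\|u\|=1$, and the identity $\haus_{m-1}(\sph_{m-1})=m\,\lambda(\Ball)$ gives exactly the constant $m$. This is the classical proof, and it buys the reader a two-line verification instead of an external lookup. One caveat on the first claim: your proposed reduction of $s_m(A)\in\ri(A)$ to the origin-symmetric case does not actually work, since a general convex body is not a translate of a symmetric one, and translation equivariance plus $s_m(-A)=-s_m(A)$ only pins down $s_m$ on symmetric bodies; the genuine argument (e.g.\ via polytopes, where the Steiner point is a strictly positive convex combination of the vertices weighted by normalized exterior angles, followed by approximation) is more involved. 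Since you explicitly fall back on the cited reference for that part --- exactly as the paper does --- this is not a gap in the end, but the symmetry heuristic should not be presented as if it were a proof.
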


The following lemma can be deduced from \cite[Theorem 9.5.3]{Aubin2009-ox} and allows us to obtain selections that pass through a specific given point in the graph.

\begin{lemma}
\label{lem:lipschitz selections}
    Let $T:X\tto Y\subseteq\R^m$ be a Lipschitz set-valued map whose values are nonempty, convex and compact, and let $(\bar x,\bar y)\in X\times Y$ such that $\bar{y}\in T(\bar{x})$. Then there exists a Lipschitz selection $\tau$ of $T$ with $\Lip(\tau)\leq5m\Lip(T)$ and such that $\tau(\bar x)=\bar y$.
\end{lemma}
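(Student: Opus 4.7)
The plan is to apply the Steiner selection of Proposition~\ref{prop:steiner selection} to an auxiliary set-valued map $\tilde T$ that degenerates to the singleton $\{\bar y\}$ at $x=\bar x$. Let $L:=\Lip(T)$ and define
\[
\tilde T(x) := T(x) \cap \overline B(\bar y,\; 2L\, d(x, \bar x)).
\]
First I would verify that $\tilde T$ is well-defined with nonempty, convex, compact values. Convexity and compactness are clear from the intersection of convex compact sets, and nonemptyness follows from the Lipschitzness of $T$: since $\bar y \in T(\bar x)$, one has $d(\bar y, T(x)) \leq d_H(T(\bar x), T(x)) \leq L\, d(x, \bar x)$, so $\proj_{T(x)}(\bar y) \in \tilde T(x)$. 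In particular, $\tilde T(\bar x) = T(\bar x)\cap\{\bar y\} = \{\bar y\}$.

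The main step is to show that $\tilde T$ is Lipschitz with respect to $d_H$, with constant at most $5L$. Fix $x, x' \in X$ and $y \in \tilde T(x)$; the goal is to produce $y' \in \tilde T(x')$ close to $y$. Let $y_0 := \proj_{T(x')}(y)$, so $\|y_0 - y\|\leq L\,d(x,x')$, and note that if $y_0 \in \tilde T(x')$ there is nothing more to do. Otherwise I would interpolate with $\pi := \proj_{T(x')}(\bar y)$, which lies in $\tilde T(x')$ since $\|\pi - \bar y\| \leq L\,d(x', \bar x)$, and take $y' := (1-s)y_0 + s\pi$ for the smallest $s\in[0,1]$ forcing $\|y'-\bar y\|\leq 2L\, d(x',\bar x)$. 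By convexity, $y'\in T(x')\cap \overline B(\bar y,2L\,d(x',\bar x)) = \tilde T(x')$. Using non-expansiveness of the projection (so that $\|\pi - y_0\| \leq \|\bar y - y\| \leq 2L\, d(x, \bar x)$) together with the triangle inequality relating $d(x,x'), d(x,\bar x), d(x',\bar x)$ and the explicit value of $s$ dictated by the ball constraint, one verifies that $\|y' - y\| \leq 5L\, d(x, x')$. Symmetrizing in $(x, x')$ yields $d_H(\tilde T(x), \tilde T(x')) \leq 5L\, d(x, x')$.

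Finally, I define $\tau(x) := s_m(\tilde T(x))$. Proposition~\ref{prop:steiner selection} asserts that $s_m$ is $m$-Lipschitz on $(\D_Y, d_H)$, so composition yields $\Lip(\tau) \leq m\cdot 5L = 5m\Lip(T)$. Since $\tilde T(\bar x) = \{\bar y\}$ we have $\tau(\bar x) = s_m(\{\bar y\}) = \bar y$, and since $s_m$ returns points in the relative interior we have $\tau(x)\in \ri(\tilde T(x))\subseteq T(x)$, so $\tau$ is a Lipschitz selection of $T$ passing through $(\bar x, \bar y)$.

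The main obstacle is the sharp estimate $\|y' - y\| \leq 5L\, d(x, x')$ in the middle step. The interpolation parameter $s$ depends nontrivially on the three quantities $d(x, x'), d(x, \bar x), d(x', \bar x)$ constrained only by the metric triangle inequality, and a careful case analysis (handling both the regime where the ball constraint binds and the boundary effects as $x, x' \to \bar x$) is required to reach the tight constant; a looser upper bound is easily obtained, and matching exactly $5m\Lip(T)$ requires exploiting the non-expansiveness of the projection rather than the naive $\|\pi - y\| \leq \|\pi-\bar y\| + \|\bar y - y\|$.
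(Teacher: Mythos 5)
Your construction is the same one the paper uses: truncate $T(x)$ by a closed ball centred at $\bar y$ whose radius vanishes at $\bar x$, then compose with the Steiner selection of Proposition~\ref{prop:steiner selection}. The paper takes radius $2d(\bar y,T(x))$ and obtains the $5\Lip(T)$-Lipschitzness of the truncated map in one line by citing the Intersection Lemma \cite[Lemma 9.4.2]{Aubin2009-ox}; you take the (comparable) radius $2\Lip(T)\,d(x,\bar x)$ and try to prove that Lipschitz estimate from scratch. The parts you do carry out — nonemptiness of $\tilde T(x)$ via $d(\bar y,T(x))\le \Lip(T)\,d(x,\bar x)$, the degeneration $\tilde T(\bar x)=\{\bar y\}$, and the final composition with $s_m$ — are all correct.

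The gap is exactly where you say it is, and it is not a cosmetic one: the inequality $d_H(\tilde T(x),\tilde T(x'))\le 5\Lip(T)\,d(x,x')$ is the entire mathematical content of the lemma, and your write-up asserts it ("one verifies that $\|y'-y\|\le 5L\,d(x,x')$") without carrying out the estimate, then concedes in the closing paragraph that only "a looser upper bound is easily obtained." Indeed, running your interpolation with $s$ chosen so that $(1-s)\|y_0-\bar y\|+s\|\pi-\bar y\|\le 2L\,d(x',\bar x)$ and bounding $\|\pi-y_0\|$ either by non-expansiveness or by the triangle inequality yields, without further care, constants on the order of $8L$--$10L$ rather than $5L$; crude intermediate bounds such as $s\|\pi-y_0\|\lesssim d(x,x')\,d(x,\bar x)/d(x',\bar x)$ even degenerate as $x'\to\bar x$. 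So as written the argument establishes a Lipschitz selection with $\Lip(\tau)\le Cm\Lip(T)$ for some unspecified universal $C$, not the stated bound $5m\Lip(T)$. To close the gap you must either complete the case analysis with the tight constant, or simply invoke the Intersection Lemma of Aubin--Frankowska (applied to the paper's radius $2d(\bar y,T(x))$), which is precisely the packaged form of the estimate you are trying to reprove.
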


We finish this section with a technical lemma, from which we may deduce that when $T(x)$ has constant dimension it is possible to produce locally Lipschitz orthonormal bases of $\mathrm{aff}(T(x))$. 

\begin{lemma}\label{lem:orthogonal basis}
    Let $T:X\tto Y$ be Lipschitz and such that $T(x)$ is convex, compact with $0\in \ri(T(x))$ for each $x\in X$. Let $\bar{x}\in X$ be such that $r(T(\bar{x}))>1$ and let $k:=\dim(T(\bar{x}))$. Then, there exists $\delta>0$ and functions $b_{i}:B(\bar{x},\delta)\to \R^m$, $i=1,\ldots,m$, such that
    \begin{enumerate}
        \item $\forall x\in B(\bar{x},\delta)$ the set $\{ b_{i}(x) \}_{i=1}^{m}$ is an orthonormal basis of $\R^m$,
        \item $\forall x\in B(\bar{x},\delta)$, the set $\{ b_{i}(x) \}_{i=1}^{k}\subseteq T(x)$, and
        \item Every $b_{i}$ is Lipschitz in $B(\bar{x},\delta)$ and $\Lip(b_{i},\bar{x})\leq 5m^3\Lip(T)$.
    \end{enumerate}
\end{lemma}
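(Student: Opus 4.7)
The plan is to construct the $b_i$ by combining the Steiner-based Lipschitz selections of Lemma~\ref{lem:lipschitz selections} with a Gram--Schmidt orthonormalization, exploiting the strict inequality $r(T(\bar x))>1$ to keep the first $k$ unit vectors inside $T(x)$. First, since $\dim T(\bar x)=k$, I fix an orthonormal basis $\{e_1,\ldots,e_m\}$ of $\R^m$ with $\{e_1,\ldots,e_k\}$ spanning $\mathrm{aff}(T(\bar x))$. Choosing any $\rho\in(1,r(T(\bar x)))$, the points $\rho e_i$ lie in $T(\bar x)$ for $i\le k$, so Lemma~\ref{lem:lipschitz selections} produces Lipschitz selections $\tau_1,\ldots,\tau_k$ of $T$ with $\tau_i(\bar x)=\rho e_i$ and $\Lip(\tau_i)\le 5m\,\Lip(T)$.

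Next, on a small ball $B(\bar x,\delta)$ the tuple $(\tau_1(x),\ldots,\tau_k(x))$ is close to the orthogonal system $(\rho e_1,\ldots,\rho e_k)$ and hence linearly independent, with Gram matrix bounded away from singular. I apply Gram--Schmidt orthonormalization to the $m$-tuple $(\tau_1(x),\ldots,\tau_k(x),e_{k+1},\ldots,e_m)$ to define $b_1(x),\ldots,b_m(x)$. By construction these form an orthonormal basis of $\R^m$ with $b_i(\bar x)=e_i$, and for $i\le k$ each $b_i(x)$ is a unit vector lying in the $k$-dimensional subspace $V(x):=\spn\{\tau_1(x),\ldots,\tau_k(x)\}\subseteq \mathrm{aff}(T(x))$. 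Lipschitzianity of each $b_i$ follows by routine application of the calculus rules of Lemma~\ref{lem:lipschitz algebra} to the inner products, norms, reciprocals and unit-vector normalizations appearing in Gram--Schmidt; tracking constants through the $O(m)$ elementary operations yields the stated bound $\Lip(b_i,\bar x)\le 5m^3\,\Lip(T)$.

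The main obstacle I foresee is certifying that $b_i(x)\in T(x)$ for $i\le k$, since $b_i(x)$ is an algebraic rather than a convex combination of the $\tau_j(x)$. Because $\|b_i(x)\|=1$ and $b_i(x)\in V(x)$, it is enough to prove the inclusion $\overline{B}(0,1)\cap V(x)\subseteq T(x)$ for $x$ near $\bar x$. At $x=\bar x$ this holds since $V(\bar x)=\mathrm{aff}(T(\bar x))$ and $r(T(\bar x))>1$; but a naive semicontinuity argument fails because $\mathrm{aff}(T(x))$ may jump in dimension and $r(T(\cdot))$ is not lower semicontinuous in general. To enforce the inclusion, I plan to supplement the construction with auxiliary Lipschitz selections $\zeta_j$ of $T$ with $\zeta_j(\bar x)=(1+\eta/2)u_j$, where $\eta:=r(T(\bar x))-1>0$ and $\{u_j\}_{j=1}^N$ is a sufficiently fine net of $\sph\cap\mathrm{aff}(T(\bar x))$. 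By convexity $\conv\{\zeta_j(x)\}\subseteq T(x)$; at $x=\bar x$ this convex hull contains $\overline{B}(0,1)\cap\mathrm{aff}(T(\bar x))$ strictly in its relative interior, so a continuity argument on the Lipschitz family $x\mapsto\conv\{\zeta_j(x)\}$ together with the Lipschitz dependence of $V(x)$ forces $\overline{B}(0,1)\cap V(x)\subseteq T(x)$ after shrinking $\delta$, finishing the verification.
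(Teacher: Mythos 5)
Your main construction is the same as the paper's: pick an orthonormal set in $T(\bar x)$ spanning $\spn(T(\bar x))$ (the paper uses unit vectors directly, exploiting $r(T(\bar x))>1$; you use $\rho e_i$ with $\rho\in(1,r(T(\bar x)))$, which changes nothing), run it through the Steiner‑based selections of Lemma~\ref{lem:lipschitz selections}, complete with constant vectors $e_{k+1},\dots,e_m$, apply Gram--Schmidt, and control the Lipschitz moduli with Lemma~\ref{lem:lipschitz algebra}. That part, including the bound $5m^3\Lip(T)$, is fine and matches the paper.

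The gap is in your verification of conclusion (2), which you rightly single out as the delicate step. The final ``continuity argument'' does not close: from $\conv\{\zeta_j(\bar x)\}\supseteq \overline{B}(0,1+\eta/4)\cap W$ (with $W:=\spn(T(\bar x))$) and Hausdorff--Lipschitz dependence of $x\mapsto \conv\{\zeta_j(x)\}$ you cannot conclude $\overline{B}(0,1)\cap V(x)\subseteq \conv\{\zeta_j(x)\}$, because the inclusion you perturb is only \emph{relatively} interior (relative to the $k$-plane $W$), and relative interiority is not stable under Hausdorff-small perturbations transverse to $W$. If $\dim T(x)>k$, the points $\zeta_j(x)$ need not lie in $V(x)$ at all; e.g.\ with $k=1$, $m=2$, the hull $\conv\{\zeta_j(x)\}$ can be a segment $[-1-\eta/4,\,1+\eta/4]\times\{\varepsilon\}$ lying parallel to, and disjoint from, $V(x)=\R\times\{0\}$, in which case it contributes nothing toward showing that the unit vector $b_1(x)\in V(x)$ belongs to $T(x)$; adjoining $0$ and the $\tau_i(x)$ only yields a simplex in $V(x)$ that need not contain $b_i(x)$ either. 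The paper settles (2) by shrinking $\delta$ so that $r(T(x))>1$ on $B(\bar x,\delta)$, whence every unit vector of $\spn(T(x))\supseteq V(x)$ lies in $T(x)$; note that this, like your net argument, is transparent when $\dim T(x)$ is constant near $\bar x$ (the only situation in which the lemma is invoked, in Proposition~\ref{prop:loclipkvol} and Theorem~\ref{thm:Lip Wasserstein1}), since then $\zeta_j(x)\in T(x)\subseteq\spn(T(x))=V(x)$ and the transverse drift cannot occur, so your argument (or a direct appeal to the stability of the inner radius as in Lemma~\ref{lem:inner radius lip}) does go through. As written for a general, possibly dimension‑jumping $T$, however, the inclusion $\overline{B}(0,1)\cap V(x)\subseteq T(x)$ is not justified and would need a different mechanism (for instance, selections through antipodal pairs $\pm\rho e_i$ and an intermediate‑value argument carried out inside $V(x)$).
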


\begin{proof} 
    Let $\{ \bar{y}_1,\ldots,\bar{y}_{k} \}\subset T(\bar{x})$ be an orthonormal set, which exists by the assumptions. Indeed, from $\dim(T(\bar{x}))=k$ we can take an orthogonal set $\{ \bar{y}_1,\ldots,\bar{y}_{k} \}$ in $\spn(T(\bar{x}))$, the conditions $0\in\ri(T(\bar{x}))$ and $T(\bar{x})$ being convex allows us to have them in $T(\bar{x})$ by a possible scalar multiplication, while $r(T(\bar{x}))>1$ shows that we can take them with $\|\bar{y}_{i}\|=1$, for each $i\in [m]$. 
    
    We complete this orthonormal set to an orthonormal basis of $\R^m$ with vectors $\{ \bar{y}_{k+1},\ldots,\bar{y}_{m}\}$. By virtue of Lemma \ref{lem:lipschitz selections}, for each $i=1,\ldots,k$ we obtain $u_{i}:X\to Y$ Lipschitz selections of $T$ such that 
    \( u_{i}(\bar{x})=\bar{y}_{i}. \)
    For each $i=k+1,\ldots,m$ let $u_{i}:X\to Y$ be the constant function equal to $\bar{y}_{i}$. Note that $\Lip(u_i,\bar{x})\leq 5m\Lip(T)$ for all $i\in[m]$.

    Since all the functions $u_{i}$ are continuous and $\{u_i(\bar{x})\}_{i=1}^{m} = \{\bar{y}_{i}\}_{i=1}^{m}$ is linearly independent, continuity of determinants entails that there exists $\delta>0$ such that for each $x\in B(\bar{x},\delta)$, $\{ u_{i}(x) \}_{i=1}^{m}$ is linearly independent. In particular, for each $x\in B(\bar{x},\delta)$, $\{ u_i(x) \}_{i=1}^{k}$ is a basis for $\spn(T(x))$. For each $x\in B(\bar{x},\delta)$, we apply the Gram-Schmidt orthogonalization procedure to $\{ u_i(x) \}_{i=1}^{m}$ and obtain

    \begin{equation}
        \begin{aligned}
            b_1(x)&:=\frac{u_1(x)}{\|u_1(x)\|},\\
            v_j(x) &:= u_j(x)-\sum_{i=1}^{j-1}\langle u_j(x),b_i(x)\rangle b_i(x),\quad
            b_{j}(x):= \frac{v_j(x)}{\left\| v_j(x) \right\|}.
        \end{aligned}
    \end{equation}
    Using Lemma \ref{lem:lipschitz algebra} we have
    \(
    \Lip(b_{1},\bar{x})\leq \frac{1}{\|u_{1}(\bar{x})\|} \Lip(u_{1},\bar{x}) \leq 5m\Lip(T).
    \) 
    Recall that $u_i(\bar{x}) = b_i(\bar{x}) = \bar{y}_i$, for every $i\in [m]$. Then, for $j>1$ 
    \begin{align*}
        \Lip(v_{j},\bar{x}) &\leq \Lip(u_j,\bar{x}) + \sum\nolimits_{i=1}^{j-1} \Lip(\langle u_j,b_i\rangle b_i,\bar{x})\\
        &\leq 5m\Lip(T)+\sum\nolimits_{i=1}^{j-1} \underbrace{|\langle u_j(\bar{x}),b_i(\bar{x})\rangle|}_{=0} \Lip(b_{i},\bar{x})
        + \|b_{i}(\bar{x})\|\Lip(\langle u_j,b_i\rangle,\bar{x})\\
        &= 5m\Lip(T)+\sum\nolimits_{i=1}^{j-1} \Lip(\langle u_j,b_i\rangle,\bar{x})\\
        &\leq 5m\Lip(T) + \sum\nolimits_{i=1}^{j-1} \|u_j(\bar{x})\|
        \Lip(b_{i},\bar{x}) + \|b_i(\bar{x})\|\Lip(u_j,\bar{x}) \\
        &\leq 5m\Lip(T) + (j-1)5m\Lip(T) + \sum\nolimits_{i=1}^{j-1}\Lip(b_{i},\bar{x})\\
        &= 5mj\Lip(T) + \sum\nolimits_{i=1}^{j-1}\Lip(b_{i},\bar{x}).
    \end{align*}
    and
    \[ \Lip(b_{j},\bar{x}) \leq \frac{1}{\|v_{j}(\bar{x})\|}\Lip(v_{j},\bar{x}) \leq 5mj\Lip(T)+\sum\nolimits_{i=1}^{j-1}\Lip(b_{i},\bar{x}). \]
    Recursively, we deduce 
    \(
    \Lip(b_{j},\bar{x}) \leq 5m\Lip(T)\sum_{i=1}^{j}i \leq 5m^3\Lip(T),
    \)
    for all $j>1$. Hence, the functions $b_{j}$ are Lipschitz in $B(\bar{x},\delta)$, possibly replacing $\delta$ by some smaller radius. Moreover, by construction, for every $x\in B(\bar{x},\delta)$ the set $\{ b_{i}(x) \}_{i=1}^{m}$ is an orthonormal basis of $\R^m$. Notice that since $r(T(\bar{x}))>1$, $\delta$ can be chosen so that for every $x\in B(\bar{x},\delta)$ we have $r(T(x))>1$ and so $\{ b_{i}(x) \}_{i=1}^{k}\subseteq T(x)$.
\end{proof}

\begin{remark}
\label{rem:global orthonormal selection}
    The orthonormal basis Lipschitz local selection $\{b_j\}_{j=1}^m$ obtained in Lemma \ref{lem:orthogonal basis} cannot be extended globally in general. For example, consider $X:=[0,1)$ with the metric $d(x,x')=\min (|x-x'|,1-|x-x'|)$. That is, $(X,d)$ represents a circle (by identifying 1 with 0) with the intrinsic distance. Let $T:X\tto \overline{B}(0,2)\subset\R^2$ defined by $T(x):=2\conv(\gamma(x),-\gamma(x))$ where $\gamma(x):=(\cos(\pi x),\sin(\pi x)))$. We have that $T$ satisfies all the assumptions of Lemma \ref{lem:orthogonal basis}. However, the only continuous local selections $\tau$ of $T$ around $\bar x=\frac{1}{2}$ such that $\|\tau(x)\|=1$ for all $x$, are $\tau=\gamma$ and $\tau=-\gamma$ but both of them have a discontinuity at $0$, when considered globally in the metric space $(X,d)$.
\end{remark}


\section{Lipschitz continuity of neutral belief}
\label{sec:lipschitz beliefs}

This is the main section of our work. As we discussed, our analysis of Lipschitz continuity of $\phi:x\mapsto \E_{\beta_x}[\theta(x,\cdot)]$ is reduced to study the Lipschitz continuity of the neutral belief over $S$, denoted by $\iota:x\in X\mapsto \mathcal{P}(Y)$ (see, cf.~\eqref{eq:def neutral belief}). Our analysis is divided in three cases: 1) when $S(x)$ is full-dimensional (nonempty interior) for every $x\in X$; 2) when $x\mapsto \dim(S(x))$ is constant, not necessarily equal to $m$; and 3) the general case when $x\mapsto \dim(S(x))$ might vary.

\subsection{The full-dimensional case}\label{subsec:full dimensional} The following theorem shows that in the full-dimensional case it is possible to retrieve Lipschitz continuity of the neutral belief with respect to the total variation distance.

\begin{theorem}\label{thm:neutral belief is radon lipschitz}
    Let $S:X\tto Y$ be a set-valued map whose images are convex and compact with nonempty interior and consider $\iota:X\to \P(Y)$ the neutral belief over $S$. If $S$ is continuous, then $\iota$ is continuous with respect to $\dTV$. Moreover, if $S$ is Lipschitz then $\iota$ is locally Lipschitz with respect to $\dTV$ with 
    \[ 
    \Lip_{\mathrm{TV}}(\iota,x) \leq \frac {2L_{Y,m}}{\lambda(S(x))}\Lip(S,x), \quad \forall x\in X, 
    \] 
    where $L_{Y,m}$ is given as in Lemma~\ref{lem:lipvolume}. Moreover, if $X$ is compact then $\iota$ is Lipschitz with respect to $\dTV$ with
    \begin{equation}
    \label{eq:iota global lip} 
    \Lip_{\mathrm{TV}}(\iota)\leq \frac{2L_{Y,m}\Lip(S)}{\min_{x\in X}\lambda(S(x))}.
    \end{equation}
    
\end{theorem}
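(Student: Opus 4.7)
The plan is to reduce the statement to a careful accounting of volumes. Fix $x,x'\in X$, set $A:=S(x)$, $B:=S(x')$, and for any measurable $f:Y\to\R$ with $\|f\|_\infty\leq 1$, I would start from the algebraic identity
\[
\E_{\iota_x}[f]-\E_{\iota_{x'}}[f]=\left(\frac{1}{\lambda(A)}-\frac{1}{\lambda(B)}\right)\int_{A\cap B} f\,d\lambda+\frac{1}{\lambda(A)}\int_{A\setminus B}f\,d\lambda-\frac{1}{\lambda(B)}\int_{B\setminus A}f\,d\lambda
\]
obtained by splitting the integrals over $A=(A\cap B)\cup (A\setminus B)$ and $B=(A\cap B)\cup (B\setminus A)$. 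Using $|f|\leq 1$, $\lambda(A\cap B)\leq \min(\lambda(A),\lambda(B))$ and $|\lambda(A)-\lambda(B)|\leq \lambda(A\triangle B)$, this bounds $|\E_{\iota_x}[f]-\E_{\iota_{x'}}[f]|$ (and hence $\dTV(\iota_x,\iota_{x'})$ after taking the sup in $f$) by
\[
\frac{\lambda(A\triangle B)}{\max(\lambda(A),\lambda(B))}+\frac{\lambda(A\triangle B)}{\min(\lambda(A),\lambda(B))}\leq \frac{2\lambda(A\triangle B)}{\min(\lambda(A),\lambda(B))}.
\]

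Next, I would invoke Lemma~\ref{lem:lipvolume}. For continuity, the full-dimensional hypothesis ensures $\lambda(S(x))>0$, and continuity of $S$ together with Lipschitzianity of $\lambda$ on $\D_Y$ yields both $\lambda(S(x'))\to \lambda(S(x))$ and $\lambda(S(x)\triangle S(x'))\to 0$ as $x'\to x$, giving $\dTV(\iota_x,\iota_{x'})\to 0$. For the local Lipschitz estimate, fix $x\in X$ and use the ``moreover'' part of Lemma~\ref{lem:lipvolume}: for any $L>L_{Y,m}$ there exists a neighborhood $\mathcal{U}$ of $S(x)$ in $\D_Y$ where $\lambda(D\triangle E)\leq L\, d_H(D,E)$ for $D,E\in \mathcal{U}$. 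Combining with the Lipschitz hypothesis $d_H(S(x_1),S(x_2))\leq \Lip(S)d(x_1,x_2)$ and choosing $\delta>0$ small so that $S(B(x,\delta))\subset \mathcal{U}$ and $\min(\lambda(S(x_1)),\lambda(S(x_2)))$ is arbitrarily close to $\lambda(S(x))$, I get
\[
\dTV(\iota_{x_1},\iota_{x_2})\leq \frac{2L\,\Lip(S,x)}{\lambda(S(x))-\varepsilon}d(x_1,x_2)\quad \forall x_1,x_2\in B(x,\delta).
\]
Letting $L\downarrow L_{Y,m}$ and $\varepsilon\downarrow 0$ delivers $\Lip_{\mathrm{TV}}(\iota,x)\leq 2L_{Y,m}\Lip(S,x)/\lambda(S(x))$.

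Finally, for the global bound under compactness of $X$: the map $x\mapsto \lambda(S(x))$ is continuous (as a composition of Lipschitz maps $x\mapsto S(x)$ and $\lambda$ on $\D_Y$) and strictly positive by the full-dimensional hypothesis, so it attains a positive minimum $\mu>0$ on $X$. The previous step then gives the uniform bound $\Lip_{\mathrm{TV}}(\iota,x)\leq 2L_{Y,m}\Lip(S)/\mu$ at every $x\in X$, which is the claimed constant.

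The main technical obstacle I anticipate is the last step: uniform local Lipschitz bounds on a merely compact (not necessarily quasiconvex) metric space do not automatically upgrade to a global Lipschitz bound with the same constant. To conclude as stated, one either needs a path-type structure on $X$ allowing an application of Lemma~\ref{lem:punctual to local lip}, or one must argue directly on arbitrary pairs $x,x'\in X$ using a global bound on $\lambda(\cdot\triangle \cdot)$ (which from the proof of Lemma~\ref{lem:lipvolume} is controlled by a polynomial in the diameter that is \emph{not} exactly $L_{Y,m}$). Reconciling the sharp constant $L_{Y,m}$ with the true global estimate will require either additional hypotheses on $X$ or a passage through the geodesic structure of $\D_Y$ given by Proposition~\ref{prop: Convex geodesic}.
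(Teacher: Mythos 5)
Your argument is essentially the paper's: both reduce $|\E_{\iota_x}[f]-\E_{\iota_{x'}}[f]|$ to the two quantities $\lambda(S(x)\Delta S(x'))$ and $|\lambda(S(x))-\lambda(S(x'))|$ and then invoke Lemma~\ref{lem:lipvolume}; the paper adds and subtracts $\tfrac{1}{\lambda(S(x))}\int_{S(x')}f$ instead of splitting over $A\cap B$, $A\setminus B$, $B\setminus A$, but the resulting estimate is the same up to replacing your $\min(\lambda(A),\lambda(B))$ by $\lambda(S(x))$. Concerning the obstacle you flag at the end: no local-to-global upgrade on $X$ is needed, because the key inequality $\dTV(\iota_x,\iota_{x'})\leq \tfrac{2L_{Y,m}}{\lambda(S(x))}\,d_H(S(x),S(x'))$ is derived in the paper as a two-point estimate valid for \emph{arbitrary} $x,x'\in X$ (its \eqref{eq:iota radon Lipschitz}), so on compact $X$ the global bound \eqref{eq:iota global lip} follows simply by dividing by $\min_{x\in X}\lambda(S(x))>0$, with no quasiconvexity of $X$ involved. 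The only genuine local-versus-global tension lives in $\D_Y$: whether $\lambda(D\Delta E)\leq L_{Y,m}\,d_H(D,E)$ holds for all pairs rather than only near a fixed $C$ with a constant $L>L_{Y,m}$, which is all that Lemma~\ref{lem:lipvolume} literally asserts. The paper uses the global version of that bound without comment, so your caveat is a fair observation about the sharpness of the constant in \eqref{eq:iota global lip} rather than a gap in your own argument, and it is resolved exactly as you suggest, by passing through the geodesic structure of $\D_Y$ rather than of $X$.
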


\begin{proof} 
     Take $x,x'\in X$ and consider a function $f\in L^\infty(Y)$ with $\|f\|_\infty\leq 1$. From Lemma \ref{lem:lipvolume} we obtain 
    \[
    \begin{aligned}
    |\E_{\iota_x}[f]-\E_{\iota_{x'}}[f]| &\leq \left|\E_{\iota_x}[f]-\frac{1}{\lambda(S(x))}\int_{S(x')}f\right|+\left|\frac{1}{\lambda(S(x))}\int_{S(x')}f-\E_{\iota_{x'}}[f]\right|\\
    &\leq \frac{1}{\lambda(S(x))}\left|\int_{S(x)} f-\int_{S(x')}f\right|+\frac{\left|\lambda(S(x'))-\lambda(S(x))\right|}{\lambda(S(x'))\lambda(S(x))}\int_{S(x')}|f|  \\
      & \leq \frac{1}{\lambda(S(x))}\lambda(S(x)\Delta S(x'))\|f\|_\infty+\frac{L_{Y,m}d_H(S(x),S(x'))}{\lambda(S(x))}\|f\|_\infty\\
      &\leq \frac{2L_{Y,m}}{\lambda(S(x))}d_H(S(x),S(x')).
    \end{aligned}
     \]
     Therefore, by the arbitrariness of $f$ we deduce that
     \begin{equation}
     \label{eq:iota radon Lipschitz}
\dTV(\iota_x,\iota_{x'})\leq \frac{2L_{Y,m}}{\lambda(S(x))}d_H(S(x),S(x')).
     \end{equation}
     By the continuity of the composition $\lambda\circ S$, again thanks to Lemma \ref{lem:lipvolume}, we deduce that $\iota$ is continuous with respect to $\dTV$. Additionally, if $S$ is Lipschitz we conclude that $\iota$ is locally Lipschitz with respect to $\dTV$ and that 
     \[
     \Lip_{\mathrm{TV}}(\iota,x)\leq\frac {2L_{Y,m}\cdot \Lip(S,x)}{\lambda(S(x))},\quad\forall x\in X.
     \]
     Finally, if $X$ is compact then $\bar{v}:=\inf_{x\in X} \lambda(S(x))>0$, and \eqref{eq:iota radon Lipschitz} entails that $\iota$ is globally Lipschitz with the bound \eqref{eq:iota global lip}.
\end{proof}

The continuity and Lipschitz continuity of the neutral belief with respect to the total variation distance seem to require full-dimensionality of the images of $S$. The following simple example illustrates this idea.

\begin{example}
\label{ex:counterexample lower dim}
    Consider the set-valued map $S:[0,1]\tto [0,1]^2$ given by $S(x)=\{x\}\times[0,1]$. Then for $f\in \CC([0,1]^2)$ defined by $f(y):=\sqrt{y_1}$ we obtain
    \[
    \E_{\iota_x}[f]=\sqrt{x},
    \]
    which clearly is not Lipschitz. Therefore, by virtue of Proposition \ref{prop:reduction to neutral beliefs}, the neutral belief $\iota$ over $S$ is not Lipschitz with respect to $\dTV$.

    Moreover, it is possible to prove that $\iota$ is not even continuous with respect to the topology induced by $\dTV$. Indeed, take $f_n(y)=\sqrt[n]{y_1}$ from which we see that for any $x>0$
    \[
    \dTV(\iota_x,\iota_0)\geq |\E_{\iota_x}[f_n]-\E_{\iota_0}[f_n]|=\sqrt[n]{x}\to 1,\text{ as } n\to\infty. 
    \]
    Hence, the belief $\iota$ is not continuous (at $0$) with respect to $\dTV$.\hfill$\Diamond$   
\end{example}

\begin{remark}
To better understand the intuition behind the requirement of full dimensional images in Theorem 4.1, we observe that in the full dimensional setting small enough perturbations of a reference set have an intersection with the reference set of a large proportion. In contrast, for a set of lower dimension there are arbitrarily small perturbations that have no single point in common with the reference one, and therefore the probability mass may be drastically redistributed. This is exactly what occurs in Example \ref{ex:counterexample lower dim}.    
\end{remark}

\subsection{The case of constant dimension}
\label{subsec:constant dim}
In this subsection we aim to prove a result analogue to Theorem \ref{thm:neutral belief is radon lipschitz}, but now relaxing the assumption of nonempty interior of the images of the set-valued map $S$. Instead, we assume that $\dim(S(x))=k$ for all $x\in X$ (the images of $S$ have constant dimension) and obtaining the (local) Lipschitz property of the neutral belief, here with respect to the Wasserstein-1 distance.

\begin{proposition}\label{prop:loclipkvol}
    Let $S:X\tto Y$ be Lipschitz and such that $S(x)$ is convex, compact with $\dim(S(x))=k$ for each $x\in X$. Then $x\mapsto \lambda_{k}(S(x))$ is locally Lipschitz. In addition, if $X$ is compact, then $\lambda_k\circ S$ is Lipschitz.
\end{proposition}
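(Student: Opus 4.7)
The plan is to reduce the problem to the full-dimensional case of Lemma \ref{lem:lipvolume} via a local change of coordinates. I argue locally around a fixed $\bar x\in X$; the global statement for compact $X$ then follows from the standard equivalence of local and global Lipschitzianity on compact metric spaces that was recalled in the preliminaries.

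First, I would normalize the setup so that Lemma \ref{lem:orthogonal basis} is applicable. The Steiner selection $\tau(x):=s_m(S(x))$ from Proposition \ref{prop:steiner selection} is $m$-Lipschitz and satisfies $\tau(x)\in\ri(S(x))$ for every $x$, hence the translated map $T(x):=S(x)-\tau(x)$ is Lipschitz with convex compact values, $\dim(T(x))=k$, $0\in\ri(T(x))$, and $\lambda_k(T(x))=\lambda_k(S(x))$. Since $0\in\ri(T(\bar x))$ yields $r(T(\bar x))>0$ via \eqref{eq:internal radius ref}, I may rescale by $\alpha>1/r(T(\bar x))$ and replace $T$ by $\hat T:=\alpha T$, which multiplies $\lambda_k$ by $\alpha^k$ and $\Lip$ by $\alpha$, so it is enough to treat the case $r(T(\bar x))>1$.

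Now apply Lemma \ref{lem:orthogonal basis} to $T$, yielding $\delta>0$ and Lipschitz maps $b_1,\ldots,b_m:B(\bar x,\delta)\to\R^m$ for which $(b_i(x))_{i=1}^m$ is orthonormal in $\R^m$ and $(b_i(x))_{i=1}^k\subseteq T(x)$. Because $\dim T(x)=k$, the latter is an orthonormal basis of $\spn(T(x))$. Let $B(x)\in\R^{m\times k}$ be the matrix with columns $b_1(x),\ldots,b_k(x)$. Then $B(x)^\top B(x)=I_k$, so $B(x)^\top:\R^m\to\R^k$ has operator norm $1$ and restricts to a linear isometry from $\spn(T(x))$ onto $\R^k$. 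Define the pullback
\[ \tilde T(x):=B(x)^\top T(x)\subseteq\overline{B}_{\R^k}(0,\diam(Y)). \]
Since $B(x)^\top$ acts as an isometry on $\spn(T(x))\supseteq T(x)$, we get $\lambda_k(\tilde T(x))=\lambda_k(T(x))=\lambda_k(S(x))$, and $\tilde T(x)$ is convex, compact, and full-dimensional in $\R^k$.

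The key step is the Lipschitzianity of $\tilde T$ in Hausdorff distance near $\bar x$. For $x,x'\in B(\bar x,\delta)$ and $y\in T(x)$, pick $y'\in T(x')$ with $\|y-y'\|\leq d_H(T(x),T(x'))$; then
\[ \|B(x)^\top y-B(x')^\top y'\|\leq \|B(x)^\top\|_{\mathrm{op}}\|y-y'\|+\|y'\|\,\|B(x)-B(x')\|_{\mathrm{op}}, \]
and the second factor is dominated by $\sqrt{k}\max_i\|b_i(x)-b_i(x')\|$, which is controlled by $\Lip(S)$ and the local Lipschitz constants from Lemma \ref{lem:orthogonal basis}. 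Symmetry gives an analogous bound for points of $T(x')$, so $\tilde T$ is locally Lipschitz around $\bar x$ with values in the fixed compact convex set $\overline{B}_{\R^k}(0,\diam(Y))$ having nonempty interior. Lemma \ref{lem:lipvolume} applied in $\R^k$ then yields local Lipschitzianity of $x\mapsto\lambda_k(\tilde T(x))=\lambda_k(S(x))$ at $\bar x$; compactness of $X$ upgrades this to a global Lipschitz conclusion. The main obstacle is precisely the Lipschitz pullback estimate: one needs an orthonormal local frame of $\spn(S(x))$ whose variation is controlled by $\Lip(S)$, which is exactly what Lemma \ref{lem:orthogonal basis} provides; without it, the Jung-type perimeter estimate powering Lemma \ref{lem:lipvolume} cannot be transported into the moving $k$-dimensional subspace.
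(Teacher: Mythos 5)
Your proof is correct and follows essentially the same route as the paper's: normalize via the Steiner selection and a rescaling so that Lemma \ref{lem:orthogonal basis} applies, pull $S(x)$ back to a full-dimensional convex body in $\R^k$ through the Lipschitz orthonormal frame, and invoke Lemma \ref{lem:lipvolume} there, with the volume preserved because the frame map acts isometrically on $\spn(S(x))$. The only cosmetic difference is that you use the $m\times k$ matrix $B(x)^\top$ mapping directly onto $\R^k$, whereas the paper rotates by the full $m\times m$ matrix $P(x)^\top$ into $\R^k\times\{0\}^{m-k}$.
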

\begin{proof}
Let $\bar{x}\in X$ and let us prove that $\lambda_k\circ S$ is Lipschitz in a ball around $\bar{x}$. First, we will consider the particular case where $S$ satisfies the following assumptions
\begin{enumerate}
    \item[(a)] $0\in\ri (S(x))$ for all $x\in X$, and
    \item[(b)] there exists $\delta>0$ such that $S(x) \subseteq\R^k\times \{0\}^{m-k}$ for all $x\in B(\bar x,\delta)$.
\end{enumerate}
In this case, by considering the canonical isometry between $\R^k\times\{0\}^{m-k}$ and $\R^k$ we may consider $S$ as with full-dimensional values (with nonempty interior) in $\R^k$ and apply Lemma \ref{lem:lipvolume} to conclude that $\lambda_k\circ S$ is Lipschitz in $B(\bar x,\delta)$. 

In the general case, we shall show that we can define, by means of translations and rotations over $S$, another convex compact set $\tilde{Y}$ and another set-valued map $U:X\tto \tilde{Y}$ verifying (a) and (b), together with the rest of assumptions in the present proposition. Hence, $\lambda_k\circ U$ is Lipschitz in the ball $B(\bar x,\delta)$ (from the previous case) and this yields that the same property holds for $S$, since $\lambda_k$ is invariant with respect to translations and rotations.

Let us set $\tilde{Y}:=\overline{B}(0,\diam(Y))$. We can define the set-valued map $\tilde{S}:X\tto \tilde{Y}$, given by $\tilde{S}(x):=S(x)-s_m(S(x))$ for all $ x\in X$. Thanks to Proposition \ref{prop:steiner selection}, $\tilde{S}$ is also Lipschitz with nonempty convex and compact images (subsets of $\tilde{Y}:=\overline{B}(0,\diam(Y))$) such that $\dim(\tilde{S}(x))=k$ and $0=s_m(S(x))-s_m(S(x))=s_m(\tilde{S}(x))\in \ri(\tilde{S}(x))$ for all $x\in X$. Since $\tilde{S}$ is constructed through translations of the images of $S$, the volume is preserved. Therefore, by replacing $S$ with $\tilde{S}$ if necessary, we may assume from now on without loss of generality that $S$ satisfies assumption (a).

We claim that we can assume without loss of generality that $r(S(\bar{x}))>1$. Indeed, we note that $r(S(\bar{x}))>0$, and so we can define $\kappa:=(1+r(S(\bar{x}))^{-1})>0$ and the set-valued map $T:X\tto Y$ given by $T(x):=\kappa S(x)$ for $x\in X$. We observe that $T$ is Lipschitz with $\Lip(T)=\kappa \Lip (S)$, it has convex and compact values and satisfies $$r(T(\bar{x}))=\kappa r(S(\bar x))=r(S(\bar x))+1>1.$$ Therefore,  noting that $\lambda_k\circ T=\kappa^k(\lambda_k\circ S)$ we deduce that the Lipschitz property of $\lambda_k\circ T$ implies that of $\lambda_k\circ S$. This justifies that we may assume $r(S(\bar x)>1$.

Now, consider $\delta>0$ and the functions $b_{i}:B(\bar{x},\delta)\to \R^m$ associated with this set-valued map $S$ as given in Lemma \ref{lem:orthogonal basis}. Let $L$ be a common Lipschitz constant for the functions $b_{i}$. Notice that the matrix-valued function $P:B(\bar{x},\delta)\to\mathcal{M}_{m\times m}(\R)$ given by $P(x):= \begin{bmatrix}b_{1}(x)& \cdots &b_{n}(x)\end{bmatrix}$ is Lipschitz with respect to the distance associated to the operator norm in the space of matrices $\mathcal{M}_{m\times m}(\R)$. Indeed, for every $x,x'\in B(\bar{x},\delta)$ and $z\in \Ball_m$ we have that 
\[ \|(P(x)-P(x'))z\| \leq \sum_{i=1}^{m}\|b_{i}(x)-b_{i}(x')\||z_{i}| \leq \left(\sum_{i=1}^{m} |z_{i}| \right) Ld(x,x')\leq \sqrt{m} Ld(x,x'), \]
which shows that $d(P(x),P(x'))\leq\sqrt{m}Ld(x,x')$.

Define the set-valued map $U:X\tto\tilde{Y}$ given by $U(x):=P(x)^{\top} S(x)$ for each $x\in B(\bar{x},\delta)$. For each $x\in B(\bar x,\delta)$, since $P(x)^\top$ acts as a rotation of $S(x)$, then the set $U(x)$ is also convex compact, $\dim(U(x))=k$ and $0\in \ri(U(x))$. Moreover, $U$ is Lipschitz on $B(\bar{x},\delta)$. Indeed,
\[
\begin{aligned}
    d_H(U(x),U(x')) &\leq d_H(P(x)^{\top}S(x),P(x')^{\top}S(x)) + d_H(P(x')^{\top}S(x),P(x')^{\top}S(x'))\\
    &\leq \|P(x) - P(x')\|\|S(x)\| + \|P(x')\|d_H(S(x),S(x')),
\end{aligned}
\]
where $\|S(x)\| = \sup\{\|y\|\colon y\in S(x)\}$. Since $P(x')$ is unitary, its operator norm verifies that $\|P(x')\|=1$. Then, 
we get that
\[
\begin{aligned}
d_H(U(x),U(x')) &\leq \diam(Y)\|P(x) - P(x')\| + d_H(S(x),S(x'))\\
&\leq (\sqrt{m}\diam(Y)L+\Lip(S))d(x,x').
\end{aligned}
\]
Clearly, the images of $U$ are nonempty convex and compact and satisfies the assumptions (a) and (b), so we may deduce from the first part of this proof that $\lambda_k\circ U$ is Lipschitz in $B(\bar x,\delta)$.

Finally, since $P(x)$ is a unitary matrix (rotation) we conclude $\lambda_k\circ S$ is Lipschitz in $B(\bar x,\delta)$ by the rotation invariance of $\lambda_k$ (see, e.g., \cite[Theorem 2]{evans2015measure}).
\end{proof}

We now present the main theorem of Lipschitz continuity for the case where $S$ has constant affine dimension. 

\begin{theorem}
\label{thm:Lip Wasserstein1}
    Let $S:X\tto Y$ be a set-valued map such that $S(x)$ is convex compact with $\textup{dim}(S(x))=k$ for each $x\in X$. If $S$ is Lipschitz, then the neutral belief $\iota$ is locally Lipschitz with respect to $\dW$ with
    $$\Lip_{\dW}(\iota,x)\leq 
 \frac{C}{\lambda_k(S(x))^2}$$
for some constant $C$ depending on $\diam(Y)$, $m$, $k$ and $\Lip(S)$. Moreover, if $X$ is compact, then $\iota$ is Lipschitz with respect to $\dW$.
\end{theorem}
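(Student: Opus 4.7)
The plan is to reduce the constant-dimension case to the full-dimensional case in $\R^k$ handled by Theorem~\ref{thm:neutral belief is radon lipschitz}, following the translation-then-rotation strategy of the proof of Proposition~\ref{prop:loclipkvol}.

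Fix $\bar x\in X$. First replace $S$ by $\tilde S(x):=S(x)-s_m(S(x))$; by Proposition~\ref{prop:steiner selection} the Steiner selection is Lipschitz and hence so is $\tilde S$. If $\tilde\iota_x$ denotes the neutral belief of $\tilde S$, then $\iota_x$ is the pushforward of $\tilde\iota_x$ by the translation $y\mapsto y+s_m(S(x))$, and testing against a $1$-Lipschitz $f$ yields
\[
\dW(\iota_x,\iota_{x'})\leq \dW(\tilde\iota_x,\tilde\iota_{x'})+\|s_m(S(x))-s_m(S(x'))\|.
\]
So it suffices to bound $\tilde\iota$. As in Proposition~\ref{prop:loclipkvol}, after a possible rescaling we may assume $r(\tilde S(\bar x))>1$; this only modifies the final constant.

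Next apply Lemma~\ref{lem:orthogonal basis} to obtain a neighborhood $B(\bar x,\delta)$ and Lipschitz maps $b_1,\ldots,b_m:B(\bar x,\delta)\to \R^m$ whose values form an orthonormal basis of $\R^m$, with $\{b_i(x)\}_{i=1}^k\subset \tilde S(x)$. The orthogonal matrix $P(x):=[b_1(x)\mid\cdots\mid b_m(x)]$ is Lipschitz in $x$, and $U(x):=P(x)^\top\tilde S(x)\subset \R^k\times\{0\}^{m-k}$. Identifying the slice with $\R^k$ yields a Lipschitz set-valued map $V:B(\bar x,\delta)\tto\R^k$ with full-dimensional, convex, compact values, satisfying $\lambda_k(V(x))=\lambda_k(S(x))$. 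Theorem~\ref{thm:neutral belief is radon lipschitz} applied in $\R^k$ gives local Lipschitzianity of the neutral belief $\iota^V$ with respect to $\dTV$; the bound $\dW\leq\tfrac12\diam(Y)\dTV$ transfers this to $\dW$.

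To pull back to $\tilde\iota$, note that $\tilde\iota_x$ is the pushforward of the embedding of $\iota^V_x$ in $\R^m$ under the isometry $P(x)$. For a $1$-Lipschitz $f:Y\to\R$, adding and subtracting $\E_{\iota^V_x}[f\circ P(x')]$ and using that $y\mapsto f(P(x')y)$ is $1$-Lipschitz (since $P(x')$ is an isometry) gives
\[
\dW(\tilde\iota_x,\tilde\iota_{x'})\leq \diam(Y)\,\|P(x)-P(x')\|+\dW(\iota^V_x,\iota^V_{x'}),
\]
and the right-hand side is linear in $d(x,x')$ by Lemma~\ref{lem:orthogonal basis} and the Lipschitzianity of $\iota^V$. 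Collecting all constants, and tracking how the rescaling to $r(\tilde S(\bar x))>1$ feeds into both $\Lip(V,x)$ and $\diam(V(x))$ in the conversion $\dW\leq\tfrac12\diam(V(x))\dTV$, produces the claimed bound of the form $C/\lambda_k(S(x))^2$. For compact $X$, the map $x\mapsto\lambda_k(S(x))$ is continuous and strictly positive by Proposition~\ref{prop:loclipkvol}, hence uniformly bounded away from zero; combining the uniform local Lipschitz estimate on small distances with the trivial bound $\dW\leq\diam(Y)$ on large distances upgrades local Lipschitzianity to a global one on $X$.

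The main obstacle is that $\tilde\iota_x$ and $\tilde\iota_{x'}$ are supported on different $k$-dimensional affine slices of $\R^m$, so they cannot be compared directly after identification with $\R^k$. The Lipschitz dependence of the rotation $P(x)$ on $x$ provided by Lemma~\ref{lem:orthogonal basis}, together with the isometry property of $P(x)$, is exactly what allows the decomposition above to reduce the comparison to a $\dW$-estimate in $\R^k$.
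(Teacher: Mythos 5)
Your proof is correct, but it takes a genuinely different route from the paper's. The paper works directly in $\dW$: after the same normalizations (Steiner-point translation, rescaling so that $r(S(\bar x))>1$, and the Lipschitz orthonormal frame $P(x)$ from Lemma~\ref{lem:orthogonal basis}), it performs a radial change of variables rewriting $\int_{S(x)}f$ as an integral over the fixed ball $\Ball_k$ of an integrand $\varphi_z(x)$ built from $r_{U(x)}(z/|z|)$, proves via Lemma~\ref{lem:inner radius lip} and the calculus rules of Lemma~\ref{lem:lipschitz algebra} that $\varphi_z$ is Lipschitz in $x$ uniformly in $z$, and then divides by the Lipschitz volume $\lambda_k(S(x))$. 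You instead transport the problem to $\R^k$ through the isometries $P(x)$ and invoke the full-dimensional total-variation result (Theorem~\ref{thm:neutral belief is radon lipschitz}) as a black box, paying only the two transfer terms $\|s_m(S(x))-s_m(S(x'))\|$ and $\diam(Y)\|P(x)-P(x')\|$ (both linear in $d(x,x')$ by Proposition~\ref{prop:steiner selection} and Lemma~\ref{lem:orthogonal basis}) before converting $\dTV$ to $\dW$ on the bounded set; the decomposition of the test function and the isometry argument for step (II) are sound, and the identification of $\iota_x$ as a pushforward of the uniform measure on $V(x)$ is exactly right. What your route buys is modularity: it bypasses Lemma~\ref{lem:inner radius lip} and the explicit integral manipulation, and it even yields the sharper local bound $C/\lambda_k(S(x))$, which implies the stated $C/\lambda_k(S(x))^2$ since $\lambda_k(S(x))\leq\lambda_k(\Ball_k)\diam(Y)^k$. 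The one point where you are no more precise than the paper is the claim that $C$ depends only on $\diam(Y)$, $m$, $k$ and $\Lip(S)$: the rescaling by $\kappa=1+r(S(\bar x))^{-1}$ needed to apply Lemma~\ref{lem:orthogonal basis} injects a dependence on the inner radius $r(S(\bar x))$ that neither your argument nor the paper's explicitly eliminates; since you flag this and your bookkeeping matches the paper's level of detail, I do not count it as a gap.
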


\begin{proof}
Let $\bar{x}\in X$. 
    As in the proof of Proposition~\ref{prop:loclipkvol}, without loss of generality we can suppose that for any $x\in X$ it holds $s_m(S(x))=0$ so that $0\in \ri(S(x))$, and also that $r(S(\bar x))>1$. 
  
    Consider $\delta>0$ and $P:B(\bar{x},\delta)\to\mathcal{M}_{m\times m}(\R)$ as in the proof of Proposition~\ref{prop:loclipkvol}. 
    Let $f:Y\to\R$ with $\Lip(f)\leq 1$. Noting that
    \[ 
    |\E_{\iota_{x}}[f]-\E_{\iota_{x'}}[f]| =  |\E_{\iota_{x}}[f-f(0)]-\E_{\iota_{x'}}[f-f(0)]|, 
    \]
    
    We assume without lossing generality that $f(0) = 0$. Using change of variables, the goal now is to write $\int_{S(x)}f(z)dz$ as an integral in $\Ball_k$. Recalling the notation $r_{A}(u):=\sup\{t\geq 0: tu\in A\}$ and $U(x) := P(x)^{\top}S(x)$, we see that
    \[
    \begin{aligned}
    \int_{S(x)} f(z) dz &= \int_{U(x)} f(P(x)z)\underbrace{|\det(P(x))|}_{=1} dz \\ 
    &= \int_{\mathbb{S}_{k-1}}\int_{0}^{r_{U(x)}(v)} f(tP(x)v,0_{m-k}) t^{k-1} dt dv \\
    &= \int_{\mathbb{S}_{k-1}}\int_{0}^{1} f(r_{U(x)}(v)tP(x)v,0_{m-k}) r_{U(x)}(v)^{k} t^{k-1} dt dv \\
    &=\int_{\Ball_k} f\left(r_{U(x)}\left(\frac{z}{|z|}\right)P(x)z,0_{m-k}\right)r_{U(x)}\left(\frac{z}{|z|}\right)^{k} dz. \end{aligned}
    \]
    We claim that for fixed $z\in \Ball_k\setminus\{0\}$, the function
   \[ 
   \varphi_{z}(x):=  f\left(r_{U(x)}\left(\frac{z}{|z|}\right)P(x)z,0_{m-k}\right)r_{U(x)}\left(\frac{z}{|z|}\right)^{k}
   \]
    is Lipschitz near $\bar{x}$, with constant independent of $z$. Indeed, by identifying $U(x)\subset \R^k\times\{0_{m-k}\}$ as a full dimensional subset of $\R^k$, Lemma \ref{lem:inner radius lip} entails that $x\mapsto r_{U(x)}(\frac{z}{|z|})$ is Lipschitz over $B(\bar{x},\delta)$ uniformly in $z$. 
    Therefore applying the calculus rules in Lemma \ref{lem:lipschitz algebra}, we deduce the functions $\varphi_{z}$, $z\in \Ball_k\setminus\{0\}$, are Lipschitz over $B(\bar{x},\delta)$, with a common Lipschitz constant $K>0$. Furthermore, note that for every $v\in \sph_{k-1}$ one has that $r_{U(x)}(v)\leq r_Y(P(x)v)\leq \diam(Y)$, and so
    \[
    \varphi_z(x) \leq \diam(Y)\max_{v\in \sph_{k-1}}\{ r_{U(x)}(v)^k \}\leq \diam(Y)^{k+1}, \qquad \forall z\in\Ball_k\setminus\{0\}.
    \]
    Finally, by Proposition \ref{prop:loclipkvol}, $x\mapsto \lambda_k(S(x))$ is Lipschitz near $\bar{x}$, with a constant $K_{Y,m}\Lip(S)$  where $K_{Y,m}>0$ depends only on $\diam(Y)$ and $m$. Then, noting that we can assume $\tfrac{1}{2}\lambda_k(S(\bar{x}))\leq \lambda_k(S(x))\leq \diam(Y)^k\lambda_k(\Ball_k)$ for $x\in B(\bar{x},\delta)$, we have that
    \[ 
    \begin{aligned}
    |\E_{\iota_{x}}[f]-\E_{\iota_{x'}}[f]| &=\left| \int_{\Ball_k} \frac{\varphi_{z}(x)}{\lambda_k(S(x))}-\frac{\varphi_{z}(x')}{\lambda_k(S(x'))} dz\right|\\
    &= \left| \int_{\Ball_k} \frac{\varphi_{z}(x)-\varphi_z(x')}{\lambda_k(S(x))}+\left(\frac{1}{\lambda_k(S(x))} - \frac{1}{\lambda_k(S(x'))}\right)\varphi_z(x') dz\right|\\
    &\leq \lambda_k(\Ball_k) \left( \frac{K}{\lambda_k(S(x))} + \frac{K_{Y,m}\Lip(S)}{\lambda_k(S(x))\lambda_k(S(x'))}\diam(Y)^{k+1}\right)d(x,x')\\
    &\leq \lambda_k(\Ball_k) \left( \frac{2K\lambda_k(S(\bar{x}))}{\lambda_k(S(\bar{x}))^2} + \frac{4K_{Y,m}\Lip(S)}{\lambda_k(S(\bar{x}))^2}\diam(Y)^{k+1}\right)d(x,x')\\
    &= \frac{C}{\lambda_k(S(\bar{x}))^2}d(x,x').
    \end{aligned}
    \]
    Since this last estimate is independent of $f$ as long as it is 1-Lipschitz, we deduce that $x\mapsto \iota_{x}$ is locally Lipschitz with respect to  $\dW$. 
    
    The proof is then complete, since the second part of the statement is direct. 
\end{proof}

\begin{remark}
    In contrast to Section~\ref{subsec:full dimensional} concerning the full-dimensional case, assuming compactness of $X$ in Proposition~\ref{prop:loclipkvol} and Theorem~\ref{thm:Lip Wasserstein1}, we do not have explicit bounds for the global Lipschitz constants of the volume function and the neutral belief over $S$, respectively. We may obtain explicit bounds under the additional assumption that $X$ is a quasiconvex space. 
    Considering Remark \ref{rem:global orthonormal selection} the technique developed in this paper cannot directly be applied to get explicit bounds on the Lipschitz constants
\end{remark}

\subsection{The general case of variable dimension}
\label{subsec:noncanstant dim}

 Motivated by \cite{salas2023existence}, we aim to study the variation of dimensionality of $S(x)$ by 
 approximating it by inner and outer ``rectangles''. That is, we look at the existence of set-valued maps $T_0,T_1,R_0,R_1:X\tto \R^m$ such that
 \(
 T_0(x) + R_0(x) \subset S(x) \subset T_1(x) + R_1(x),
 \) 
 and the additional property that $T_1$ and $T_2$ have constant affine dimension while $R_0$ and $R_1$ control the variation of dimension. In \cite{salas2023existence}, continuity of the neutral belief is deduced as a consequence of rectangular continuity: that is, continuity of $T_0$ and $T_1$, and a balancing relation of the volumes of $R_0$ and $R_1$. However, Example~\ref{ex:lip svm nonlip belief} shows that this is not enough for Lipschitz continuity. By reinforcing the hypotheses on the maps $T_0, T_1,R_0$ and $R_1$, we deduce the following theorem.

\begin{theorem}\label{thm:rectangular}
Let $S:X\tto Y$ be a set valued map with $S(x)$ nonempty convex and compact for each $x\in X$. Assume that there exists a constant $L>0$ such that for all $\bar{x}\in X$ there exists $\delta>0$ and set-valued maps $T_0,T_1:X\tto Y$ and $R_0,R_1:X\tto \R^m$ have nonempty convex and compact values such that 
\begin{enumerate}
\item[(i)] For every $x\in B(\bar{x},\delta)$,
\begin{equation}\label{eq:sandwich}
    T_0(x)+R_0(x)\subseteq S(x)\subseteq T_1(x)+R_1(x).
\end{equation}
\item[(ii)] For $j=0,1$, $R_j(x)\subseteq \spn(T_j(x)-T_j(x))^{\perp}\cap\spn(S(x)-S(x))$, and $T_j$ has constant affine dimension, i.e. $\dim(T_j(x))=\dim(T_j(\bar{x}))$  for all $x\in B(\bar{x},\delta)$.
\item[(iii)] For $j=0,1$, $T_j$ and $R_j$ are $L$-Lipschitz with 
$R_j(\bar{x})=\{0\}$ and $T_j(\bar{x})=S(\bar{x})$.

\item[(iv)] The 
function
\begin{equation}\label{eq:ProportionOrthogonalPart}
h(x):=\left\{\begin{array}{cl}\displaystyle\frac{\lambda_{d_x-d_{\bar{x}}}(R_1(x))\lambda_{d_{\bar{x}}}(T_1(x))}{\lambda_{d_x-d_{\bar{x}}}(R_0(x)) \lambda_{d_{\bar{x}}}(T_0(x))},&\text{if }x\in B(\bar{x},\delta)\setminus\{\bar{x}\}\\
\\
1&\text{if } x=\bar{x}.\\
\end{array}\right.
\end{equation} 
is $L$-Lipschitz, where $d_x:=\dim(S(x))$.
\end{enumerate}
Then the neutral belief $\iota$ over $S$ is calm with respect to $\dW$.
Moreover, if $X$ is a compact 
quasiconvex space, then $\iota$ is Lipschitz with respect to $\dW$.
\label{thm:belief W1 Lip nonconstant dim}
\end{theorem}

\begin{proof}
Fix $\bar{x}\in X$ and we shall prove that $\iota$ is calm at $\bar{x}$. We first assume that $\dim (S(\bar{x}))=k$ and $\dim(S(x))=l>k$ for all $x\neq \bar{x}$ near enough $\bar{x}$. Let us write $r:=l-k>0$.

Take $f:Y\to\R$ Lipschitz with $\Lip(f)\leq 1$ and assume without loss of generality that $\min_Y f=0$. Let us denote $\|R_1(x)\|:=\sup_{z_r\in R_1(x)}\|z_r\|$. Then for $x\neq \bar{x}$
\begin{equation*}
    \begin{aligned}
        \frac{1}{\lambda_{r}(R_1(x))}\int_{S(x)}fd\lambda_{l}&\leq \frac{1}{\lambda_{r}(R_1(x))}\int_{T_1(x)+R_1(x)}f(z)d\lambda_{l}(z)\\
        &=\frac{1}{\lambda_{r}(R_1(x))}\int_{T_1(x)}\int_{R_1(x)}f(z_t+z_r)d\lambda_r(z_r)d\lambda_{k}(z_t)\\
        &=\int_{T_1(x)}\left(\frac{1}{\lambda_{r}(R_1(x))}\int_{R_1(x)}f(z_t+z_r)d\lambda_r(z_r)\right)d\lambda_{k}(z_t)\\
        &\leq \int_{T_1(x)} (f(z_t)+\|R_1(x)\|)d\lambda_{k}(z_t)\\
        &=\int_{T_1(x)} fd\lambda_{k}+\|R_1(x)\|\lambda_k(T_1(x))\\
    \end{aligned}
\end{equation*}
Then, we have that
\begin{equation*}
\begin{aligned}
    \E_{\iota_{x}}[f]
    &\leq \frac{1}{\lambda_k(T_0(x))\lambda_r(R_0(x))}\int_{S(x)} fd\lambda_l\\
    &\leq \frac{\lambda_k(T_1(x))\lambda_r(R_1(x))}{\lambda_k(T_0(x))\lambda_r(R_0(x))}\left(\frac{1}{\lambda_k(T_1(x))}\int_{T_1(x)} fd\lambda_k+\|R_1(x)\|\right)\\
\end{aligned}
\end{equation*}
where we recognize one of the terms as the expected value of $f$ with respect to the uniform distribution over 
$T_1(x)$. 
Therefore noting that $d_H(R_1(\bar{x}),R_1(x))=\|R_1(x)\|\leq Ld(x,\bar{x})$ we have
\begin{equation}
\label{eq:ineq iota1}
\begin{aligned}
    \E_{\iota_{x}}[f]&\leq h(x)\left(\E_{\iota^1_{x}}[f]+Ld(x,\bar{x}) \right),
\end{aligned}
\end{equation}
where $\iota^1$ is the neutral belief over $T_1$. By Theorem \ref{thm:Lip Wasserstein1} we know that for some $L_1>0$ we have
\[
\E_{\iota^1_x}[f]-\E_{\iota_{\bar{x}}}[f]=\E_{\iota^1_{x}}[f]-\E_{\iota^1_{\bar{x}}}[f]\leq L_1 d(x,\bar{x})
\]
Then, using the Lipschitz continuity of $h$, and assuming $\delta<L^{-1}$ we get
\[
\begin{aligned}
\E_{\iota_x}[f]-\E_{\iota_{\bar{x}}}[f]&\leq h(x)L_1 d(x,\bar x)+(h(x)-1)\E_{\iota_{\bar{x}}}[f]+Lh(x)d(x,\bar{x}) \\
&\leq ((L+L_1)h(x)+L\E_{\iota_{\bar{x}}}[f]) d(x,\bar{x})\\
&\leq ((L+L_1)(1+L\delta)+L\|f\|_\infty)d(x,\bar x)\leq (2L+2L_1+\|f\|_\infty)d(x,\bar{x})
\end{aligned}
\]
Using a similar argument (now based on the Lipschitz continuity of $T_0$ instead of $T_1$) we may obtain a bound for $\E_{\iota_x}[f]-\E_{\iota_{\bar{x}}}[f]$. Indeed, in the same vein of \eqref{eq:ineq iota1} we can prove that
\begin{equation*}
\begin{aligned}
    \E_{\iota_{x}}[f]&\geq 
    h(x)^{-1}\left(\frac{1}{\lambda_k(T_0(x))}\int_{T_0(x)} fd\lambda_k-\|R_0(x)\|\right)\\
\end{aligned}
\end{equation*}
and noting that $\|R_0(x)\|\leq Ld(x,\bar{x})$ we have
\begin{equation}
\label{eq:ineq iota iota0}
\begin{aligned}
    \E_{\iota_{x}}[f]&\geq h(x)^{-1}\left(\E_{\iota^0_{x}}[f]-Ld(x,\bar{x}) \right),
\end{aligned}
\end{equation}
where $\iota^0$ is the neutral belief over $T_0$. Again using Theorem \ref{thm:Lip Wasserstein1}, we know that there exists $L_0>0$ such that
\begin{equation}
\label{eq:ineq combined expected}
\E_{\iota_{\bar{x}}}[f]-\E_{\iota^0_x}[f]=\E_{\iota^0_{\bar{x}}}[f]-\E_{\iota^0_{x}}[f]\leq L_0 d(x,\bar{x}).
\end{equation}

From the fact that $\lim_{x\to \bar{x}} h(x)=1$, taking $\delta<(2L)^{-1}$ and by a nonlocal analogue of Lemma \ref{lem:lipschitz algebra} we see that $\Lip(1/h(\cdot))\leq\frac{L}{(1-L\delta)^2}\leq 4L$. Using this together \eqref{eq:ineq iota iota0} and \eqref{eq:ineq combined expected} with may prove that
\[
\begin{aligned}
\E_{\iota_{\bar{x}}}[f]-\E_{\iota_x}[f]
&\leq (4L\|f\|_\infty+2(L+L_0))d(x,\bar x)
\end{aligned}
\]

Summing up we deduce that if $\delta>0$ is small enough, there exists a constant $\widehat{L}>0$ such that
\[
|\E_{\iota_x}[f]-\E_{\iota_{\bar{x}}}[f]|\leq \widehat{L} d(x,\bar{x}).
\]
Since $\Lip(f)\leq1$ and $\min f=0$ implies $\|f\|_\infty\leq\diam(Y)$, we can take for instance
$$\widehat{L}= 2L+4L\diam(Y)+2\max\{L_1,L_0\}.$$
Since this is true for all $f:X\to\R$ with $\Lip(f)\leq 1$ we conclude that $\widehat{L}$ is an upper bound for the modulus of calmness for $\iota$ with respect to $\dW$.

Now for the general case note that continuity of $S$ entails that there is $\delta>0$ such that $k=
\dim(S(\bar{x}))\leq\dim(S(x))$ for all $x\in B(\bar{x},\delta)$. Then we can write $B(\bar{x},\delta)=\bigcup_{l= k}^{m}X_l$ where $X_l:=\bar{x}\cup \{x\in B(\bar{x},\delta): \dim(S(x))=l\}$. We have that $S$ restricted to $X_l$ satisfies all the properties of the theorem so that we conclude that the restriction of $\iota$ to $X_l$ is calm at $\bar{x}$ and hence, also in the union $X$, since it is finite.

Finally, if $X$ is quasiconvex, then thanks to Lemma \ref{lem:punctual to local lip} and since the modulus of calmness are uniformly bounded, we deduce that $\iota$ is Lipschitz.
\end{proof}

We observe that the assumptions (i) to (iv) in Theorem \ref{thm:belief W1 Lip nonconstant dim} are satisfied if $S$ is $L$-Lipschitz and the images have constant dimension around the reference point $\bar{x}$. This follows from taking the set-valued maps $T_0:=T_1:=S$ and $R_0:=R_1:=\{0\}$. Therefore, Theorem~\ref{thm:belief W1 Lip nonconstant dim} is a generalization of Theorem \ref{thm:Lip Wasserstein1}, in the setting of quasiconvex spaces.

\section{Applications to bilevel programming}\label{sec:bilevel}

We study the applications to two standard settings in bilevel programming: 1) when $S(x)$ is given by approximated solutions of a lower-level problem verifying Slater CQ; and 2) when $S(x)$ is given as the exact solution set of a parametric fully linear problem.

\subsection{Approximated solutions under Slater CQ}
\label{subsec:slatercq}

Let us consider a (regularized) bilevel programming problem of the form
\begin{equation}
\label{eq:bilevel regularized}
\begin{aligned}
    \min_{x\in X}&\quad\theta(x,y)\\
    s.t.&\quad  y\in \varepsilon\text{-}\argmin_{z}\{f(x,z):g(x,z)\leq 0\}.
\end{aligned}
\end{equation}
In this model only $x$ is decided by the leader while $y$ is the decision of the follower and it is modeled by the leader as a random variable with support in $S(x):=\varepsilon\text{-}\argmin_{z}\{f(x,z):g(x,z)\leq 0\}$. A simple way to deal with the uncertainty, called the Bayesian approach in \cite{salas2023existence}, is that the leader has a belief of its distribution and pose the problem as in equation \eqref{eq:Decision-Dependent-Uncertainty}. 

\begin{theorem}\label{thm:Lip-SlaterCase}
Let $X\subset\R^n$ be a nonempty set, $\varepsilon>0$ and for each $x\in X$ consider $S(x):=\varepsilon\text{-}\argmin_y \{f(x,y):g(x,y)\leq 0\}$. We assume
\begin{enumerate}
    \item[(i)] $f:\R^n\times\R^m\to\R$ and $g:\R^n\times\R^m\to\R^p$ are locally Lipschitz functions and for each $x\in X$,
    \item[(ii)]$f(x,\cdot)$ and $g_i(x,\cdot)$ are convex  for all $i\in[p]$,
    \item[(iii)]Slater CQ holds, that is, $\exists y\in\R^m$ such that $g_i(x,y)<0$ for all $i\in[p]$.
\end{enumerate}
Assume also that the set
\[
\hat{Y} = \bigcup_{x\in X} \{y\in \R^m\colon g(x,y)\leq 0\},
\]
is compact. Then the neutral belief over $S$ is locally Lipschitz with respect to the total variation distance $\dTV$.
\end{theorem}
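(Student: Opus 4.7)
The plan is to invoke Theorem~\ref{thm:neutral belief is radon lipschitz}. For this, I would view $S$ as a set-valued map from $X$ into the convex compact set $Y := \conv(\hat{Y})$, and verify that $S$ has nonempty, convex, compact values with nonempty interior in $\R^m$, and that $S$ is locally Lipschitz with respect to the Hausdorff distance $d_H$. The conclusion then follows directly from the bound $\Lip_{\mathrm{TV}}(\iota,x)\leq \tfrac{2L_{Y,m}}{\lambda(S(x))}\Lip(S,x)$ provided by Theorem~\ref{thm:neutral belief is radon lipschitz}.

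First, rewrite $S(x) = \{y\in\hat Y : g(x,y)\leq 0,\ f(x,y)\leq v(x)+\varepsilon\}$, where $v(x) := \min_y\{f(x,y)\colon g(x,y)\leq 0\}$. Convexity of $S(x)$ is immediate from hypothesis (ii), and compactness follows from $S(x)\subset\hat{Y}$ together with closedness of the defining inequalities. For the nonempty interior, I would fix $x\in X$, take a Slater point $\bar y$ with $g_i(x,\bar y)<0$ for all $i$, and any optimal solution $y^{*}(x)$, and consider $y_t := (1-t)y^{*}(x)+t\bar y$. Convexity of $g_i(x,\cdot)$ gives $g_i(x,y_t)\leq t g_i(x,\bar y)<0$, while convexity of $f(x,\cdot)$ gives $f(x,y_t)\leq v(x)+t(f(x,\bar y)-v(x))$. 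Choosing $t>0$ small enough makes both the $g$-constraints and the $\varepsilon$-optimality constraint strict at $y_t$, so by continuity $y_t\in \inte S(x)$.

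The harder step is the local Lipschitzianity of $S$ in the Hausdorff distance, which I would handle in two layers. Layer~(a): the feasibility map $\tilde S(x) := \{y\in \hat Y\colon g(x,y)\leq 0\}$ is locally Lipschitz under the joint local Lipschitzness of $g$, the fiberwise convexity of $g(x,\cdot)$, and Slater CQ. This is a classical Robinson-type result for parametric convex inequality systems (see, e.g., Bonnans--Shapiro). Combining this with the Lipschitzness of $f$ on the compact set containing the graph, standard marginal-function analysis yields that $v(x)=\min_{y\in\tilde S(x)} f(x,y)$ is locally Lipschitz. Layer~(b): augment the convex inequality system with the extra Lipschitz constraint $f(x,y)-v(x)-\varepsilon\leq 0$. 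The point $y_t$ constructed above is a Slater point for this enlarged system at any reference $\bar x$, and by the continuity of $f$, $g$ and $v$ it remains strictly feasible on a neighborhood of $\bar x$. Applying the same Robinson-type theorem to the enlarged system then yields that $S$ itself is locally Lipschitz with respect to $d_H$ around $\bar x$.

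The main obstacle I anticipate is the sensitivity argument in layer~(b), because the right-hand side $v(x)+\varepsilon$ of the added constraint is itself $x$-dependent; the crux is that only Lipschitzness of $v$ is needed, which is exactly what layer~(a) supplies. Once both layers are in place, together with the nonempty interior property established in the second paragraph, Theorem~\ref{thm:neutral belief is radon lipschitz} applies and delivers the local Lipschitzianity of the neutral belief $\iota$ over $S$ with respect to $\dTV$.
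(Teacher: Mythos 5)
Your proposal is correct and follows essentially the same route as the paper: verify the hypotheses of Theorem~\ref{thm:neutral belief is radon lipschitz} by showing $S$ has convex compact full-dimensional values and is locally Lipschitz in $d_H$, the latter by first establishing Lipschitz stability of the feasibility map under Slater CQ, deducing Lipschitzness of the value function $v$, and then absorbing the $\varepsilon$-optimality condition as an extra Lipschitz convex constraint $f(x,y)-v(x)-\varepsilon\leq 0$ still satisfying Slater CQ. The only difference is that where you cite a Robinson-type stability theorem, the paper derives the same error bound directly from the slope function $(x,y)\mapsto d(0,\partial_y h(x,y))$ being locally bounded away from zero for $h=\max_i g_i$; this is a presentational rather than a structural difference.
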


\begin{proof}
First note that since the conclusion is about local Lipschitz continuity in a subset of $\R^n$, we may assume that $X:=\overline{B}(x_0,\delta_0)$ for some $x_0\in X$ and $\delta_0>0$. Since in this case $X$ is compact, any local Lipschitz map, as $f$ and $g$, is therefore Lipschitz.

The hypotheses ensure that $S:X\tto Y$ has convex compact values with nonempty interior, where $Y$ can be taken as the closed convex hull of $\hat{Y}$. Moreover, we claim that $S$ is Lipschitz. In view of Theorem \ref{thm:neutral belief is radon lipschitz}, this implies that the neutral belief $\iota$ is locally Lipschitz with respect to $\dTV$. So let us prove our claim.

Let us assume first that $f$ is a constant function so that $S(x)$ can be written as $$S(x)=\{y\in\R^m:h(x,y)\leq 0\},$$
where $h(x,y):=\max_{i=1}^pg_i(x,y).$ Due to our assumptions, $h$ is (locally) Lipschitz and for each $x\in X$, $h(x,\cdot)$ is convex and satisfies Slater CQ, that is, there exists $y\in \R^m$ such that $h(x,y)<0$. Then, for every $x\in X$ and for any $y$ such that $h(x,y)\geq 0$ it holds $0\notin \partial_y h(x,y)$, where $\partial_y h(x,y)$ stands for the usual convex subdifferential of $h(x,\cdot)$ (see, e.g., \cite{rockafellar2009variational}). We know that convexity of $h(x,\cdot)$ and continuity of $h$ entails, as a mild application of Attouch theorem \cite{attouch1977convergence}, that the slope function $(x,y)\mapsto d(0,\partial_y h(x,y))$ is lower semicontinuous (see \cite{DaniilidisSalasTapia-Garcia2025}). If we let $\bar{x}\in X$, using the compactness of $\{y\colon h(\bar{x},y) =0\}$ and the monotonicity of the slope along steepest descent curves (see, e.g., \cite[Theorem 17.2.3]{Attouch2014Variational}), we deduce that there exist $\delta,\alpha>0$ such that $d(0,\partial_y h(x,y))\geq \alpha>0$ for all $x\in B(\bar{x},\delta)$ and $y\in \R^m$ such that $h(x,y)\geq 0$. Then taking $\gamma=\frac{2}{\alpha}>0$ (see e.g. \cite{fabian2010error}) we obtain an error bound
$$d(y,S(x))\leq \gamma\max\{ h(x,y),0\},\quad \forall x\in B(\bar x,\delta),\forall y\in \R^m.$$
If we take $y\in S(x')$, so that $h(x',y)\leq 0$, then 
\[
\begin{aligned}
    d(y,S(x))&\leq \gamma \max\{h(x,y)-h(x',y)+h(x',y),0\}\\
    &\leq \gamma \max\{h(x,y)-h(x',y),0\}\\
    &\leq \gamma|h(x,y)-h(x',y)|\leq \gamma L\|x-x'\|.
\end{aligned}
\]
Hence taking supremum over $y\in S(x')$ we obtain
$e(S(x'),S(x))\leq \gamma L\|x-x'\|$ and by symmetry we deduce that $S$ is Lipschitz in $B(\bar{x},\delta)$ with Lipschitz constant $\gamma L$.

Next we consider the general case, that is, when $f$ is not necessarily constant and we shall see that this case can be reduced to the previous case. Indeed, we define 
\[K(x):=\{y\in \R^m:g(x,y)\leq0\}\]
which from the previous analysis it can be deduced that $K$ is Lipschitz. This together with the Lipschitz continuity of $f$ implies that the value function 
\[
v(x):=\inf_y\{f(x,y):y\in K(x)\}
\] 
is (locally) Lipschitz. Indeed, let $\bar{x}\in X$, take $\delta>0$, $x,x'\in B(\bar{x},\delta)$ and $\varepsilon>0$. Then, there exists $y\in K(x)$ such that $v(x)+\varepsilon \geq f(x,y)$. Since $K$ is Lipschitz with respect to the Hausdorff distance we know there exists $y'\in K(x')$ such that $\|y-y'\|\leq \Lip(K)d(x,x')$. Then we have
\[
\begin{aligned}
v(x')-v(x) - \varepsilon&\leq f(x',y')-f(x,y)\\
&\leq \Lip(f)\cdot(\|(x',y')-(x,y)\|)\\
&\leq \Lip(f) (\|x'-x\|+\|y'-y\|)\leq \Lip(f) (1+\Lip(K))\|x'-x\|
\end{aligned}
\]
Taking $\varepsilon\to 0$, we deduce by symmetry that $v$ is Lipschitz in $B(\bar{x},\delta)$. Therefore, $g_0(x,y):=f(x,y)-v(x)-\varepsilon$ defines a Lipschitz function, convex on the second variable, and satisfying the Slater CQ. Moreover, we may write 
$$S(x):=\{y\in\R^m: \tilde{h}(x,y)\leq 0\}$$
where $\tilde{h}(x,y)=\max_{i=0}^p g_i(x,y)$, from which we see that $S$ is Lipschitz, and the proof is complete.
\end{proof}

\begin{corollary}
\label{cor:bilevel Slater}
    Under the assumptions of Theorem \ref{thm:Lip-SlaterCase} and for the neutral belief, the regularized bilevel problem \eqref{eq:bilevel regularized} formulated as \eqref{eq:Decision-Dependent-Uncertainty} under the Bayesian approach, has a locally Lipschitz objective function.
\end{corollary}

\subsection{Exact solutions in linear bilevel problems}

In this section we consider the model \eqref{eq:bilevel regularized} but with exact solutions in the lower level problem and a fully linear structure, that is,
\begin{equation}
\begin{aligned}
    \min_{x\in X}&\quad g^{\top}x+h^{\top}y\\
    s.t.&\quad  y\in \argmin_{z}\{c^{\top}z\colon Ax+Bz\leq b\},
\end{aligned}
\end{equation}
where $X:=\{x\in \R^n:\exists y\in \R^m, Ax+By\leq b\},$ and $A,B$ and $b,c,g,h$ are matrices and vectors of appropriate dimensions.
As in Section \ref{subsec:slatercq}, the leader's problem is as \eqref{eq:Decision-Dependent-Uncertainty} and can be simplified to 
\begin{equation}
\label{eq:bilevel linear expected}
\begin{aligned}
    \min_{x\in X}&\quad g^{\top}x+\E_{\beta_x}[h^{\top}y]
\end{aligned}
\end{equation}
where $\beta_x\in \P(Y)$ concentrates on $S(x):=\argmin_{z}\{c^{\top}z\colon Ax+Bz\leq b\}$ for $x\in X$.

\begin{lemma}\label{lemma:Hoffman-uniform} Let $U\in \mathcal{M}_{m\times q}(\R)$. For every $w\in \R^m$, let $F(w) := \{y\mid Uy\leq w\}$. For every $k\in\N$, there exists a constant $H(U,k)>0$  such that for every collection $w_1,\ldots,w_k\in\R^m$ one has that
\[
\bigcap_{i=1}^k F(w_i) \neq \emptyset \implies 
d\left(y, \textstyle\bigcap_{i=1}^k F(w_i) \right) \leq H(U,k)\max_{i\in[k]} d(y,F(w_i)), ~\forall y\in\R^q. 
\]
\end{lemma}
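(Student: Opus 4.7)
The plan is to reduce the statement to a single invocation of the classical Hoffman bound applied to a suitably stacked matrix. First, I would form the matrix $\tilde{U} \in \mathcal{M}_{km \times q}(\R)$ obtained by vertically stacking $k$ copies of $U$, and the vector $\tilde{w} := (w_1^\top,\ldots,w_k^\top)^\top \in \R^{km}$. The crucial observation is the identity
\[
\bigcap_{i=1}^{k} F(w_i) = \{y\in \R^q : \tilde{U}y \leq \tilde{w}\},
\]
and this polyhedron is nonempty by the hypothesis.

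Next, I would invoke the classical Hoffman lemma in its standard form: for any fixed matrix $M$, there is a constant $H(M) > 0$ depending \emph{only} on $M$ such that whenever the polyhedron $\{z\colon Mz \leq v\}$ is nonempty, one has $d(y,\{z\colon Mz \leq v\}) \leq H(M)\|(My-v)_+\|$ for all $y$, where $(\cdot)_+$ denotes the componentwise positive part. Applied to $\tilde{U}$, this yields a single constant $H_k := H(\tilde{U})$ that depends on $U$ and $k$ but is independent of the particular $w_i$'s. The proper identification of this data-independent constant is the only conceptually delicate point of the proof, and everything else is linear-algebraic accounting.

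The remaining two steps are routine. On the one hand, from the block structure of $\tilde{U}y - \tilde{w}$ one directly gets
\[
\|(\tilde{U}y-\tilde{w})_+\| \leq \sqrt{k}\,\max_{i\in[k]} \|(Uy-w_i)_+\|.
\]
On the other hand, for each $i$, taking $y_i \in F(w_i)$ to be a projection of $y$ onto $F(w_i)$, and using $Uy_i \leq w_i$, one obtains the componentwise bound $(Uy-w_i)_+ \leq (U(y-y_i))_+$, whence $\|(Uy-w_i)_+\| \leq \|U\|_{\mathrm{op}}\,d(y,F(w_i))$.

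Chaining these inequalities gives
\[
d\Bigl(y,\bigcap_{i=1}^k F(w_i)\Bigr) \leq H_k\,\sqrt{k}\,\|U\|_{\mathrm{op}}\,\max_{i\in[k]} d(y,F(w_i)),
\]
so one may set $H(U,k) := H_k \sqrt{k}\,\|U\|_{\mathrm{op}}$. I do not anticipate any real obstacle: the entire content of the lemma is captured by the stacking trick, and the genuine mathematical input (the matrix-only dependence of the Hoffman constant) is classical and needs only to be cited carefully.
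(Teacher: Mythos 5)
Your proposal is correct and follows essentially the same route as the paper's proof: stack $k$ copies of $U$, apply Hoffman's bound with a constant depending only on the stacked matrix, and convert $\|(Uy-w_i)_+\|$ into $d(y,F(w_i))$ via a point of $F(w_i)$. The only difference is cosmetic: the paper works with the $\ell^\infty$ norm (so the stacked residual equals the max exactly), whereas your Euclidean-norm version picks up an extra harmless $\sqrt{k}$ in the constant.
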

\begin{proof}
    By \cite{Hoffman1952}, for every matrix $M\in \mathcal{M}_{m_1\times m_2}(\R)$ and every vector $b\in\R^{m_1}$ such that the system $Mz\leq b$ is consistent, there exists a constant $c>0$ such that
    \[
    d(x, \{z:Mz\leq b\}) \leq c\|(Mx-b)_+\|_{\infty}, \quad\forall x\in \R^{m_2},
    \]
     where, for $a\in\R^{m_1}$, $a_+:=(\max\{a_1,0\},\ldots,\max\{a_{m_1},0\})$. By \cite[Proposition 1]{PennaVeraZuluaga2021}, the constant $c$ can be taken as a constant $H(M)$ depending only on the matrix $M$. Set $M\in \mathcal{M}_{km\times q}(\R).
    $ as the matrix that has $k$ copies of $U$ downwards 
and $b\in\R^{km}$ as the vector obtained by concatenating $w_1,\ldots, w_k$. We have that
the nonemptyness of $\bigcap_{i=1}^k F(w_i)$ ensures that the system $Mz\leq b$ is consistent. Consequently, we obtain
\[
d\left(y, \textstyle\bigcap_{i=1}^k F(w_i) \right)=\; d(y, \{z:Mz\leq b\})\leq H(M)\|(My-b)_+\|_{\infty}, \quad\forall y\in \R^q.
\]   
The conclusion follows by noting that 
    \[
    \begin{aligned}
    \|(My-b)_+\|_{\infty} &= \max_{i\in[k]}\|(Uy-w_i)_+\|_{\infty}\\
    &\leq \max_{i\in[k]}\min_{z\in F(w_i)}\|(Uy-Uz)_+\|_{\infty} \leq \|U\|_{*}\max_{i\in[k]} d(y, F(w_i)),
    \end{aligned}
    \]
    where $\|U\|_* = \sup\{\|Uz\|_{\infty}\colon \|z\|_{\infty} = 1\}$. Then, it is enough to define $H(U,k) = \|U\|_*H(M)$.
\end{proof}

\begin{theorem}\label{thm:Lip-LinearCase} Assume that $D:=\{(x,y)\in\R^n\times\R^m: Ax+By\leq b\}$ is nonempty and bounded. Consider $S:X\tto Y$ given by $S(x):=\argmin_y \{c^\top y:Ax+By\leq b\}$, where $X:=\{x:\exists y\in \R^m, (x,y)\in D\}$ and $Y:=\{ y:\exists x\in \R^n, (x,y)\in D\}$, and let $\iota$ be the neutral belief over $S$. Then $\iota$ is Lipschitz with respect to $\dW$.
\end{theorem}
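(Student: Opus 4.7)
The plan is to combine the piecewise polyhedral structure of parametric linear programming with Theorem~\ref{thm:belief W1 Lip nonconstant dim}. First observe that, because $D$ is bounded, $X$ is the image of the polytope $D$ under the linear projection $(x,y)\mapsto x$, and is therefore itself a convex polytope, hence compact and geodesic. By Lemma~\ref{lem:punctual to local lip}, it then suffices to prove that $\iota$ is uniformly calm with respect to $\dW$ on $X$.

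The first step is the standard decomposition of $X$ into finitely many closed polyhedral cells $X_1,\ldots,X_N$ arising from the parametric LP, where each cell corresponds to a fixed optimal active-constraint structure. Writing $v(x):=\min\{c^{\top}y: By\leq b-Ax\}$, the value function $v$ is convex and piecewise linear, and on the relative interior of each $X_i$ the set
\[
S(x)=\{y: By\leq b-Ax,\; c^{\top}y\leq v(x)\}
\]
has a constant active-constraint set, so $S$ restricted to $X_i$ has constant affine dimension with images depending affinely on $x$. Applying Lemma~\ref{lemma:Hoffman-uniform} to the matrix $\bigl[\begin{smallmatrix}B\\c^{\top}\end{smallmatrix}\bigr]$, together with the Lipschitzianity of $x\mapsto (b-Ax,v(x))$, yields that $S$ restricted to each cell is Lipschitz with respect to $d_H$. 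In particular, Theorem~\ref{thm:Lip Wasserstein1} already gives calmness of $\iota$ at every $\bar{x}$ lying in the relative interior of a single cell.

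To handle a boundary point $\bar{x}$ at which several cells meet, and where $\dim S(x)$ may jump, I would argue cell-by-cell: fix a cell $X_i$ containing $\bar{x}$ and construct $T_0,T_1,R_0,R_1$ on $X_i$ as required by Theorem~\ref{thm:belief W1 Lip nonconstant dim}. Let $V:=\spn(S(\bar{x})-S(\bar{x}))$, with $d_{\bar{x}}=\dim V$. For $x$ near $\bar{x}$ in $X_i$, define $T_0(x)=T_1(x)$ as the orthogonal projection of $S(x)$ onto the affine hull of $S(\bar{x})$, and define $R_0(x)\subseteq R_1(x)$ as inner and outer polyhedral ``boxes'' living in $V^{\perp}\cap\spn(S(x)-S(x))$, capturing how $S(x)$ thickens out of $\mathrm{aff}(S(\bar{x}))$. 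The polyhedral description of $S$ on $X_i$, together with Hoffman's lemma, guarantees that each $T_j$ and $R_j$ is Lipschitz on $X_i$, with $R_j(\bar{x})=\{0\}$ and $T_j(\bar{x})=S(\bar{x})$, so conditions (i)--(iii) of Theorem~\ref{thm:belief W1 Lip nonconstant dim} follow.

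The main obstacle is condition (iv): the proportion function $h$ of \eqref{eq:ProportionOrthogonalPart} must be Lipschitz. In the linear setting, $\lambda_{d_{\bar{x}}}(T_j(x))$ is a piecewise polynomial of degree $d_{\bar{x}}$ in $x$, while $\lambda_{d_x-d_{\bar{x}}}(R_j(x))$ is a piecewise polynomial of degree $d_x-d_{\bar{x}}$, so the numerator and denominator of $h$ are homogeneous of the same total degree along any ray from $\bar{x}$ into $X_i$. The ratio therefore admits a finite limit at $\bar{x}$, but its Lipschitzianity requires the leading homogeneous terms of the outer and inner rectangles to match. The key technical step is to choose the boxes $R_0(x)$ and $R_1(x)$ concentric and mutually proportional (with a proportionality factor that is itself Lipschitz in $x$), so that $h$ is a bounded Lipschitz function on $X_i$ near $\bar{x}$. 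With (iv) verified, Theorem~\ref{thm:belief W1 Lip nonconstant dim} yields calmness of $\iota$ at $\bar{x}$ along $X_i$. Since $X=\bigcup_i X_i$ is a finite union, calmness at every point of $X$ follows, and the fact that the polytope $X$ is geodesic upgrades this to global Lipschitzianity of $\iota$ with respect to $\dW$ via Lemma~\ref{lem:punctual to local lip}.
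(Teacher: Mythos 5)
Your overall architecture (reduce to calmness on the geodesic polytope $X$, then invoke Theorem~\ref{thm:belief W1 Lip nonconstant dim} with a rectangular sandwich at points where $\dim S(x)$ jumps) matches the paper's, and your identification of Lemma~\ref{lemma:Hoffman-uniform} as the key polyhedral tool is on target. But the construction of the inner approximation has a genuine gap. You set $T_0(x)=T_1(x)=\proj(S(x);\mathrm{aff}(S(\bar{x})))$ and then need some $R_0(x)\subseteq F^{\perp}$ with $T_0(x)+R_0(x)\subseteq S(x)$. Since the projection of $S(x)$ onto $F$ is in general strictly larger than every slice of $S(x)$ parallel to $F$, the only admissible translates are degenerate: for instance, for $S(x)=\{y_1\ge 0,\ y_2\ge 0,\ y_1+y_2\le 1+\epsilon,\ y_2\le\epsilon\}$ degenerating to $S(\bar{x})=[0,1]\times\{0\}$, one has $\proj(S(x);F)=[0,1+\epsilon]$ and the only vertical translate of it contained in $S(x)$ is at height $0$, so $R_0(x)=\{0\}$, $\lambda_{d_x-d_{\bar{x}}}(R_0(x))=0$, and the function $h$ of \eqref{eq:ProportionOrthogonalPart} is not even finite. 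Your proposed remedy --- choosing $R_0$ and $R_1$ ``concentric and mutually proportional with a Lipschitz factor'' --- is never constructed; and if $R_0(x)=\rho(x)R_1(x)$ with $T_0=T_1$, condition (iv) forces $\rho(x)\to 1$, i.e.\ $S(x)$ would have to be asymptotically a product of its two projections, which the inner inclusion does not deliver.

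The paper's proof resolves exactly this point differently: it keeps $R_0=R_1=R(x):=\proj(S(x);F^{\perp})$ (so that the $R$-volumes cancel and $h$ reduces to $\lambda_k(T_1(x))/\lambda_k(T_0(x))$, which is Lipschitz by Proposition~\ref{prop:loclipkvol}), and takes as inner set not the projection but $T_0(x):=\bigcap_{z\in R(x)}(S(x)-z)$, which by construction satisfies $T_0(x)+R(x)\subseteq S(x)$. The nontrivial step is then showing that this intersection of translates is calm at $\bar{x}$, and that is precisely where Lemma~\ref{lemma:Hoffman-uniform} is used: writing the intersection over the finitely many extreme points of $R(x)$ and applying the Hoffman bound, uniform in the right-hand sides, gives $d_H(T_0(x),S(\bar{x}))\le 2\kappa L\|x-\bar{x}\|$. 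Your cell decomposition of the parametric LP is not needed for this (the paper works at an arbitrary $\bar{x}$ using the known global Lipschitzianity of $S$), and without the corrected inner construction your verification of conditions (i)--(iv) does not go through.
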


\begin{proof}
We know that $S:X\tto Y$ has convex compact values and it is Lipschitz (see e.g. \cite[Chapter IX, section 7]{dontchev2006well}), say $L$-Lipschitz.
Note that incorporating the optimality as a new constraint we may write $S(x) = \{y\colon\tilde{B}y \leq \varphi(x)\}$, where $\tilde{B}\in\mathcal{M}_{(p+1)\times m}(\R)$ and $\varphi:X\to\R^{p+1}$ is Lipschitz.  Let $\bar{x}\in X$ and $k:=\dim(S(\bar{x}))$ and let us assume without loss of generality that $0\in S(\bar{x})$.

Let $F := \mathrm{span}(S(\bar{x}))$ and define the set-valued maps $R:X\tto \R^m$ and $T_j:X\tto \R^m$ given by
\[
\begin{aligned}
R(x) := \mathrm{proj}(S(x); F^{\perp}),\quad
T_1(x):=\proj(S(x),F), \quad
T_0(x) := \bigcap_{z\in R(x)} (S(x) - z).
\end{aligned}
\]
From the construction we see that 
\begin{equation*}
T_0(x)+R(x)\subseteq S(x)\subseteq T_1(x)+R(x),\qquad \forall x\in X.
\end{equation*}
Moreover, $R$ and $T_1$ are $L$-Lipschitz as composition of Lipschitz maps. We shall prove that 
$T_0$ is also Lipschitz. Since, $R(x)$ is a compact polytope, we have that the set of extreme points $\mathrm{ext}(R(x))$ is nonempty and finite, and we can write 
\begin{equation}\label{eq:T0 simplified}
T_0(x) = \bigcap_{z\in \mathrm{ext}(R(x))} S(x)-z.
\end{equation}
Indeed, the direct inclusion holds. Now, let $y\in \bigcap_{z\in \mathrm{ext}(R(x))} S(x)-z$. This yields that $y+z\in S(x)$ for all $z\in \mathrm{ext}(R(x))$. Now, let $r \in R(x)$. Then, there exists nonnegative values $(t_z\colon z\in\mathrm{ext}(R(x)))$ such that
    \begin{equation*}
    r = \sum_{z\in \mathrm{ext}(R(x))} t_z z\quad\text{ and }\quad\sum_{z\in \mathrm{ext}(R(x))} t_z = 1.
    \end{equation*}
    Thus, $y + r = \sum_{z\in \mathrm{ext}(R(x))} t_z (y+z) \in S(x)$ by convexity. Then $y\in S(x)-r$, and since $r\in R(x)$ is arbitrary, we conclude that $y\in T_0(x)$. This proves \eqref{eq:T0 simplified}.

We know by \cite{salas2023existence} that $T_0(x)$ is nonempty for every $x$ in some neighborhood $U$ of $\bar{x}$ in $X$. Since $S(x)$ has at most $N=\binom{p}{m}$ extreme points, we can define
    \(
    \kappa := \max_{k\in [N]} H(\tilde{B},k),
    \)
    where $H(\tilde{B},k)$ is given by Lemma \ref{lemma:Hoffman-uniform}. Then, since $|\mathrm{ext}(R(x))|\leq N$ and noting that $S(x) - z = \{w:\tilde{B}w\leq \varphi(x)-\tilde{B}z \}$, we get by Lemma \ref{lemma:Hoffman-uniform} that for all $y \in \R^m$
    \begin{equation*}
    \begin{aligned}
    d(y,T_0(x)) 
    &=d\left(y,\bigcap_{z\in\mathrm{ext}(R(x))}\{w:\tilde{B}w\leq \varphi(x)-\tilde{B}z \}\right)\\
    &\leq \kappa \max_{z\in \mathrm{ext}(R(x))}d\left(y,\{w:\tilde{B}w\leq \varphi(x)-\tilde{B}z \}\right)\\
    &= \kappa \max_{z\in \mathrm{ext}(R(x))} d(y,S(x)-z).
    \end{aligned}
    \end{equation*}
    Now, applying the formula above for every $y\in S(\bar{x})$ we get that
    \[
    \begin{aligned}
    d(y,T_0(x)) &\leq \kappa 
    \max_{z\in \mathrm{ext}(R(x))} d(y,S(x)-z)\\
    &\leq \kappa \left(\max_{z\in\mathrm{ext}(R(x))}\|z\|+d(y,S(x))\right)\\
    &\leq \kappa(d_H(R(x),R(\bar{x})))+d_H(S(x),S(\bar{x})) \leq (\kappa +1)L\|x-\bar{x}\|.
    \end{aligned}
    \]
    Noting that 
    \begin{align*}
    \sup_{y\in T_0(x)} d(y,S(\bar{x})) &\leq \sup_{y\in T_0(x), r\in R(x)}\|r\| + d(y+r,S(\bar{x}))\\
    &\leq d_H(R(x), R(\bar{x})) + \sup_{y\in S(x)}d(y,S(\bar{x}))\leq 2L\|x-\bar{x}\|,
    \end{align*}
    we conclude that 
    \(
    d_H(T_0(x), S(\bar{x})) \leq 2\kappa L\|x-\bar{x}\|.
    \)
    and so $T_0$ is $2\kappa L$-calm at $\bar{x}$. Since $\bar{x}$ is arbitrary (and so $T_0$ is uniformly calm), and since $X$ is a geodesic space by convexity, we deduce that $T_0$ is Lipschitz by Lemma~\ref{lem:punctual to local lip}.
    The conclusion of the theorem follows directly from Theorem~\ref{thm:rectangular}.
\end{proof}

\begin{corollary}
\label{cor:linear bilevel}
    For the neutral belief, the objective function of the linear bilevel problem under the Bayesian approach \eqref{eq:bilevel linear expected} is Lipschitz relative to its domain.
\end{corollary}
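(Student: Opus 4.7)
The plan is to obtain this result as a direct combination of Theorem~\ref{thm:Lip-LinearCase} with Proposition~\ref{prop:reduction to neutral beliefs}. Set $\theta(x,y):=g^{\top}x+h^{\top}y$, which is affine and hence globally Lipschitz on $\R^n\times\R^m$ with constant $\max\{\|g\|,\|h\|\}$. The objective of the leader's problem \eqref{eq:bilevel linear expected} is then $\phi(x)=g^{\top}x+\E_{\iota_x}[h^{\top}\cdot]$.

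First, I would verify that the ambient spaces $X$ and $Y$ defined in Theorem~\ref{thm:Lip-LinearCase} are nonempty, compact and convex subsets of $\R^n$ and $\R^m$, respectively. Compactness is immediate: $D=\{(x,y):Ax+By\leq b\}$ is closed by construction and bounded by hypothesis, so it is compact; both $X$ and $Y$ are continuous (linear) projections of $D$, hence compact as well. Convexity follows because $D$ is a polyhedron, and linear projections preserve convexity. These are precisely the structural assumptions needed to apply Proposition~\ref{prop:reduction to neutral beliefs}.

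Next I would invoke Theorem~\ref{thm:Lip-LinearCase}, which asserts that the neutral belief $\iota$ over $S$ is Lipschitz with respect to $\dW$. Combined with the Lipschitzianity of $\theta$, Proposition~\ref{prop:reduction to neutral beliefs} (part 2) yields that the map $x\mapsto \E_{\iota_x}[\theta(x,\cdot)]$ is Lipschitz on $X$. Since $x\mapsto g^{\top}x$ is already Lipschitz with constant $\|g\|$, the full objective $\phi(x)=g^{\top}x+\E_{\iota_x}[h^{\top}\cdot]$ is Lipschitz on $X$, which is exactly the claimed conclusion.

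There is no real obstacle here: the corollary is essentially a repackaging of Theorem~\ref{thm:Lip-LinearCase} in the language of the Bayesian bilevel objective. The only (minor) point to flag is the qualifier ``relative to its domain'' in the statement, which reflects that the Lipschitz estimate holds on the feasible set $X$ of the leader (which, as noted above, is a nonempty compact convex polytope), and not on the whole space $\R^n$.
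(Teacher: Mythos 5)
Your proposal is correct and follows exactly the route the paper intends: Theorem~\ref{thm:Lip-LinearCase} gives $\dW$-Lipschitzianity of the neutral belief, and Proposition~\ref{prop:reduction to neutral beliefs} (case 2) with the affine, hence Lipschitz, integrand $\theta(x,y)=g^{\top}x+h^{\top}y$ yields the Lipschitzianity of the objective on the compact convex domain $X$. Your verification of the compactness and convexity of $X$ and $Y$ is a sensible addition that the paper leaves implicit.
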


\begin{remark}
In the linear case, the intuition is that the polyhedral structure of $S$ ensures that dimension changes occur in a controlled manner, which compensates for the lack of full dimensionality.    
\end{remark}

\section{Conclusion and open questions}\label{sec:conclusion}

The main contribution of this paper is providing sufficient conditions for the (locally) Lipschitz property of the expected value in the case of decision-dependent distributions whose support is convex and compact and moves in a Lipschitz fashion. This is done by (and reduced to) studying the Lipschitz property of the neutral belief over Lipschitz set-valued maps.

The present work is limited to prove the Lipschitz property of the expected value but not necessarily to give sharp Lipschitz constants. While some explicit bounds are obtained for the full-dimensional case or under quasiconvexity of the space, the general question of computation of Lipschitz constants and how to exploit them in algorithms is out of the present scope, and we leave it open for a future work.


\paragraph{Acknowledgements:} This paper has been partially supported by the project Math-Amsud 23-MATH-13. The third and fourth authors were partially supported by the project FONDECYT 1251159, ANID-Chile. The third author  was partially funded by the Center
of Mathematical Modeling (CMM), FB210005, BASAL funds for centers of excellence
(ANID-Chile). The second author was partially supported by Universidad de O’Higgins (112011002-PD-17706171-9). We would like to thank Professor Terry Rockafellar for suggesting us to use the name Lipschitz continuity instead of Lipschitzianity, a term we used in a preliminary version of the manuscript.

\bibliographystyle{plain}
\bibliography{references.bib}

\end{document}